\theoremstyle{definition}
\newtheorem{theorem}{Theorem}[section]
\newtheorem{prop}[theorem]{Proposition}
\theoremstyle{definition}
\newtheorem{fact}[theorem]{Fact}
\theoremstyle{remark}
\newcommand{\Z}{\mathbb{Z}}
\newcommand{\A}{\mathcal{A}}
\newcommand{\B}{\mathcal{B}}
\newcommand{\X}{\mathcal{X}}
\newcommand{\cO}{\mathcal{O}}
\newcommand{\Spec}{\operatorname{Spec}}
\newcommand{\Conf}{\operatorname{Conf}}
\newcommand\ontop[2]{\genfrac{}{}{0pt}{}{#1}{#2}}
\newcounter{Qcount}
\begin{document}

\stepcounter{Qcount}

\title{Generalized Minors and Tensor Invariants}

\author{Ian Le}
\address{Perimeter Institute for Theoretical Physics\\ Waterloo, ON N2L 2Y5}
\email{ile@perimeterinstitute.ca}

\author{Sammy Luo}
\address{Stanford University\\ Stanford, CA 94305}
\email{sammyluo@stanford.edu}

\begin{abstract} In the paper \cite{BFZ}, the authors define functions on double Bruhat cells which they call generalized minors. By relating certain double Bruhat cells to the spaces $\Conf_3 \A_G$ and $\Conf_4 \A_G$, we give formulas for these generalized minors as tensor invariants. This allows us to verify certain weight identities conjectured in \cite{Le2}. In that paper, we showed that the weights of the tensor invariants determined the quiver for the cluster structure on $\Conf_3 \A_G$. In this paper, we also show the reverse--that the weights of tensor invariants can by computed from the structure of the quiver. The weight identities are important because they are necessary for the existence of cluster structures on the moduli space of framed local systems $\X_{G',S}$ and $\A_{G,S}$.
\end{abstract}

\maketitle

\tableofcontents

\section{Introduction}

Let $G$ be a group which is simply connected, split and semi-simple, and let $U$ be a maximal unipotent subgroup. We can consider the principal flag variety $G/U$, which we denote $\A_G$. We can then consider the configuration space of three principal flags $\Conf_3 \A_G$, which is the quotient of $\A_G^3$ by the diagonal left action of $G$.

The space $\Conf_3 \A_G$ has a cluster structure which plays a fundamental role in higher Teichmuller theory \cite{FG1}. A cIuster structure is roughly two pieces of data: a quiver (a directed graph without cycles), as well as a set functions called cluster variables, with one cluster variable associated with each vertex of the quiver.

In order to construct the cluster structure on $\Conf_3 \A_G$, it is useful to describe cluster variables on $\Conf_3 \A_G$ in terms of both generalized minors and as tensor invariants. In this paper we present a simple formula that expresses a generalized minor in terms of a tensor invariant. We will not prove this formula in full generality--we leave that task to the more comprehensive upcoming work of Goncharov and Shen, where they give a nice conceptual proof that fits into a broader framework. Let us remark that one can verify the formula in any particular case by direct calculation, as was done in \cite{Le} for the classical groups.

Instead, we use the formula to verify the conjectures outlined in the paper \cite{Le2}. There, we showed that if these conjectures hold, then the cluster variables determine the quiver. As a consequence, we get a obtain an explicit construction of this cluster structure--we get a simple description of the quiver as well as formulas for the tensor invariant space the cluster variables lie in. Note that once the cluster variables are given as tensor invariants, the quiver for the cluster algebra is actually over-determined.

Next, we explain a sort of converse: we show that the quiver for $\Conf_3 \A_G$ determines the cluster variables as tensor invariants. The structure of the quiver gives us a recursive procedure for calculating which tensor invariant space each cluster variable must lie in. We show that this answer coincides with the formula for the generalized minor as a tensor invariant, giving a self-consistency check for our formulas.

We now step back to give some context. Let $S$ be a topological surface $S$ with non-empty boundary,  and let $G'$ be the adjoint form of this group. One can then consider the moduli space of $G$-local systems on the surface $S$. We can consider two variations of this space, $\X_{G',S}$ and $\A_{G,S}$ \cite{FG1}. These spaces are moduli of local systems with the additional data of framing at the boundary of $S$.

The pair of spaces $(\X_{G',S}, \A_{G,S})$ carries an important structure: they form a {\it cluster ensemble}. This means that
\begin{itemize}
\item $\X_{G',S}$ has the structure of a cluster $\X$-variety; 
\item $\A_{G,S}$ has the structure of a cluster $\A$-variety; 
\item There is a map $p: \A_{G,S} \rightarrow \X_{G',S}$ that intertwines these structures.
\end{itemize}
Detailed definitions can be found in \cite{FG2}. By using the formalism of cluster ensembles as well as cutting and gluing arguments, one can reduce the construction of the cluster ensemble structure on the pair $(\X_{G',S}, \A_{G,S})$ to a simpler problem: to understand the cluster structure on $\A_{G,S}$ in the case that $S$ is a disc with three marked points on the boundary.

For these reasons, in this paper we concentrate on the case where $S$ is a disk with three marked points on the boundary. In this case, the moduli of (framed) local systems is given by $\Conf_3 \A_G$. It turns out that it will also be interesting and useful to study the case where $S$ is a disc with four marked points on the boundary, where the space of framed local systems is $\Conf_4 \A_G$.

Let us explain the organization of this paper. In Section 2, we define the spaces $\Conf_n \A_G$ and give some background on cluster algebras. In Section 3, we start by spelling out the construction of the cluster structure on $\Conf_3 \A$. We describe the quiver as well as the cluster variables, both in terms of generalized minors and in terms of tensor invariants. We hope that this can serve as a useful summary of the main facts about the cluster structure on $\Conf_3 \A$.

In Section 4, we explain how the structure of the quiver associated to a reduced word for $w_0$ determines the cluster variables as tensor invariants. In Section 5, we verify the conjectures of \cite{Le2}, which imply that the quiver for $\Conf_3 \A$ is determined uniquely given a choice of a reduced word for $w_0$. In Section 6 we extend our results to explain how a double reduced-word for $(w_0,w_0)$ gives a cluster structure on $\Conf_4 \A$.

\medskip
\noindent{\bf Acknowledgments} 
This research was peformed as part of the Perimeter Institute's Undergraduate Research Program.

\section{Background}

\subsection{The definition of $\Conf_m \A$}

Let $S$ be a compact oriented surface with boundary, and possibly with a finite number of marked points on each boundary component. We will always take $S$ to be hyperbolic, meaning it either has negative Euler characteristic, or contains enough marked points on the boundary (in other words, we can give it the structure of a hyperbolic surface such that the boundary components that do not contain marked points are cusps, and all the marked points are also cusps). If we consider the boundary of $S$ and remove the marked points on the boundary, we will call the resulting set the \emph{punctured boundary}.

Let $G$ be a semi-simple algebraic group. When $G$ is simply-connected, we can define the higher Teichmuller space $\A_{G,S}$. It will be the space of twisted local systems on $S$ with structure group $G$ and some extra structure of a framing of the local system at the boundary components of $S$.

The definition of a twisted local system is somewhat technical, so we refer the reader to \cite{FG1} or \cite{Le} for details. Let us give a brief incomplete explanation.

The maximal length element $w_0$ of the Weyl group of $G$ has a natural lift to $G$, denoted $\overline w_0$. Let $s_G:= {\overline w}^2_0$. It turns out that $s_G$ is in the center of $G$ and that $s^2_G =e$. Depending on $G$, $s_G$ will have order one or order two. For example, for $G = SL_{2k}$, $s_G$ has order two, while for $G = SL_{2k+1}$, $s_G$ has order one. In type $C_n$, $s_G$ has order $2$, while in types $B_n$ and $D_n$, the order of $s_G$ depends on $n \mod 4$. A twisted local system is a local system on the punctured tangent bundle of $S$ such that the monodromy in any fiber is a particular central element called $s_G$.

In this paper we will only be concerned with the case where $S$ is a disc with $m$ marked points (in fact we will only deal with the cases $m=3$ or $4$.)  We can describe the space $\A_{G,S}$ quite explicitly in this case. Let us take the components of the punctured boundary of $S$ and number them $1, 2, \dots, m$, starting at some component and moving counterclockwise.

All twisted local systems on a disc are isomorphic, so all the interesting data comes from the framing along the punctured boundary. For each of the components of the boundary, the framing corresponds to a principal flag. Thus we get $m$ elements of $G/U$.  The automorphisms of the twisted local system change these flags by the diagonal action of $G$ on $(G/U)^m$. Thus we get a non-canonical identification
$$\A_{G, S} \simeq \Conf_m \A := G \backslash (G/U)^m.$$
 
If we had chosen a different boundary component to label as $1$, we would have a different identification $\A_{G, S} \simeq \Conf_m \A$ which would differ from our original identification by some power of the twisted cyclic shift map
$$T: \Conf_m \A \rightarrow \Conf_m \A,$$
$$T(U_1, U_2, \dots, U_n) = (s_G \cdot U_n, U_1, \dots, U_{n-1}).$$
In the case of a disc with $m$ marked points, we will think of $\A_{G, S}$ as $\Conf_m \A$ equipped with the cyclic shift map.

Note that the appearance of $s_G$ in the twisted cyclic shift maps comes from the fact that we are dealing with \emph{twisted} local systems. Note that this is essential, as the cluster structure on $\Conf_m \A$ is equivariant under $T$, the twisted cyclic shift map, but not under the normal cyclic shift map.

\subsection{Cluster algebras} \label{cluster}

We review here the basic definitions of cluster algebras. Cluster algebras are commutative rings that come equipped with a collection of distinguished sets of generators, called \emph{cluster variables}. These are sometimes called $\A$-coordinates, as they are functions on the $\A$ space. Each set of generators forms a \emph{cluster}. Starting from an initial cluster, one generates other clusters by the process of \emph{mutation}.

Each set of generators belongs to a seed, which roughly consists of the set of generators along with a $B$-matrix, which encodes how one mutates from one seed to any adjacent seed.

Cluster algebras are determined by an initial seed. A seed $\Sigma = (I,I_0,B,d)$ consists of the following data: 
\begin{enumerate}
\item An index set $I$ with a subset $I_0 \subset I$ of ``frozen'' indices. 
\item A rational $I \times I$ \emph{exchange matrix} $B$. It should have the property that $b_{ij} \in \Z$ unless both $i$ and $j$ are frozen.  
\item A set $d = \{d_i \}_{i \in I}$ of positive integers that skew-symmetrize $B$: $$b_{ij}d_j = -b_{ji}d_i$$ for all $i,j \in I.$ The integers $d_i$ are called \emph{multipliers}.
\end{enumerate}

For most purposes, the values of $d_i$ are only important up to simultaneous scaling. Also note that the values of $b_{ij}$ where $i$ and $j$ are both frozen will play no role in the cluster algebra, though it is sometimes convenient to assign values to $b_{ij}$ for bookkeeping purposes. These values become important in \emph{amalgamation}, where one unfreezes some of the frozen variables.

We can encode a seed via a quiver. In this paper, we will only encounter seeds where the values of the $d_i$ take on two values: we will either have that $d_i \in \{1, 2\}$ or $d_i \in \{1, 3\}$. We can color the vertices so that a vertex is black if $d_i=1$ and white if $d_i=2$ or $3$.

The arrows in the quiver carry the following information:
\begin{itemize}
\item An arrow from $j$ to $i$ means that $b_{ij}>0$ and $b_{ji}<0$.
\item $|b_{ij}|=2$ if $d_i=2$ and $d_j=1$.
\item $|b_{ij}|=3$ if $d_i=3$ and $d_j=1$.
\item $|b_{ij}|=1$ otherwise.
\end{itemize}

%Between frozen vertices, we will allow dotted arrows, which are worth half an undotted arrow, so that, for example, $|b_{ij}|=1$ if $d_i=2$ and $d_j=1$, $|b_{ij}|=\frac{3}{2}$ if $d_i=3$ and $d_j=1$, and $|b_{ij}|=\frac{1}{2}$ otherwise.

A dotted arrow between vertices $i$ and $j$ means that $b_{ij}$ is half what it would be if the arrow were solid. Thus dotted arrows are ``half-arrows.'' Our quivers will usually not carry the information of which vertices are frozen; one should just remember which vertices are frozen.

Let $k \in I \setminus I_0$ be an unfrozen index of a seed $\Sigma$. Then we can perform a mutation at $k$ to get another seed $\Sigma' = \mu_k(\Sigma)$. The frozen variables and $d_i$ are preserved, and the exchange matrix $B'$ of $\Sigma'$ satisfies
\begin{align}\label{eq:matmut}
b'_{ij} = \begin{cases}
-b_{ij} & i = k \text{ or } j=k \\
b_{ij} & b_{ik}b_{kj} \leq 0 \\
b_{ij} + |b_{ik}|b_{kj} & b_{ik}b_{kj} > 0.
\end{cases}
\end{align}

To a seed $\Sigma$ we associate a collection of \emph{cluster variables} $\{A_i\}_{i \in I}$ and a split algebraic torus $\A_\Sigma := \Spec \Z[A^{\pm 1}_I]$.

If $\Sigma'$ is obtained from $\Sigma$ by mutation at $k \in I \setminus I_0$, there is a birational \emph{cluster transformation} $\mu_k: \A_\Sigma \to \A_{\Sigma'}$.  This map is given by the \emph{exchange relation}

\begin{align}\label{eq:Atrans}
\mu_k^*(A'_i) = \begin{cases}
A_i & i \neq k \\
A_k^{-1}\biggl(\prod_{b_{kj}>0}A_j^{b_{kj}} + \prod_{b_{kj}<0}A_j^{-b_{kj}}\biggr) & i = k.
\end{cases}
\end{align} 

\section{The cluster structure on $\Conf_3 \A_G$}

We start by fixing some notation. $G$ will be a simply-connected, semi-simple group. Let us fix a Borel subgroup $B=B^+ \subset G$ and an opposite Borel subgroup $B^-$. Let $\Phi$ be the set of roots, $\Phi^+ \subset \Phi$ be the set of positive roots, and $\alpha_1,\dots,\alpha_n$ be the set of simple roots. The simple roots correspond to nodes in the Dynkin diagram for $G$. Let $\omega_1,\dots,\omega_n$ be the fundamental weights. They form a basis orthogonal to the $\alpha_i$, so that $\langle \omega_i,\alpha_j\rangle =\delta_{ij}$.

Let $W$ be the Weyl group of $G$. It is generated by the simple reflections $s_1,\dots,s_n$ which correspond to reflections in the hyperplanes perpendicular to $\alpha_1,\dots,\alpha_n$. Any element $w$ of the Weyl group has a length, which is the minimal length of an expression of $w$ in terms of the generators $s_1,\dots,s_n$. Denote by $w_0$ the unique element of $W$ of maximal length $K:=|\Phi^+|=\frac{|\Phi|}{2}$. It is a fact that $w_0$ must correspond to $-\Psi$ for some automorphism $\Psi$ of the Dynkin diagram $D$ (which is often the identity).

We will call an expression
$$w_0=s_{i_1} s_{i_2} s_{i_3} \cdots s_{i_{K-1}} s_{i_K}$$
a \emph{reduced word for $w_0$}.

The goal of this section will be to give a procedure for constructing the cluster structure on $\Conf_3 \A_G$. The cluster structure on $\Conf_3 \A_G$ is closely related to the cluster structure on $B^-$. (It is also, of course, related to the cluster structure on $B^+$, but we choose to work with $B^-$.) 

Berenstein, Fomin and Zelevinsky constructed cluster structure on $B^-$, the Borel in the group $G$ (\cite{BFZ}). They construct a seed for each choice of a reduced word for $w_0$. They explicitly construct a quiver, and define the cluster variables as \emph{generalized minors}.

 In fact, we will see that each seed for $B^-$ coming from a reduced word for $w_0$ is a sub-seed of a corresponding seed for $\Conf_3 \A_G$. Let us elaborate.

Let $(A_1, A_2, A_3)$ be a triple of principal flags in $\Conf_3 \A_G$. There will be functions attached to these vertices that only depend on two of the three flags. We will call these the {\em edge functions} or \emph{edge variables}. Let us call all other functions, which depend on all three flags, {\em face functions} or \emph{face variables}. We are interested in describing the face functions, as well as the edge functions attached to the edges $A_1A_2$, $A_2A_3$, and $A_1A_3$.

Consider the natural map from $B^-$ to $\Conf_3 \A_G$ given by the formula
$$i: b \in B^- \rightarrow (U^-, \overline{w_0}U^-, b\cdot \overline{w_0}U^-) \in \Conf_3 \A_{G}.$$
This is an injective map. Functions on $\Conf_3 \A_G$ can be pulled back to give functions on $B^-$. We will describe a cluster structure on $\Conf_3 \A_G$ such that cluster variables on $B^-$ are pulled back from some subset of cluster variables on $\Conf_3 \A_G$. We can be somewhat more precise: the face variables and the edge variables for the edges $A_2A_3$, and $A_1A_3$ pull back to give cluster variables on $B^-$, while the edge variables for the edge $A_1A_2$ take the value $1$ on the image $i(B^-)$.

\subsection{The cluster algebra on $B^-$}

Now let us review how to construct the cluster structure on $B^-$. A more detailed account can be found in \cite{BFZ}.

The double Bruhat cell 
$$G^{w_0,e}:= B^+w_0B^+ \cap B^-eB^-$$
is an open set in $B^-$. The image of $G^{w_0,e} \subset B^-$ under $i$ is the set of triples of flags where every pair is in generic position, and the edge variables on the edge $A_1A_2$ take the value $1$.

Take any reduced-word $s_{i_1}\dots s_{i_K}$ for $w_0$. Our convention will be that we read the word from right to left, i.e., the simple reflection $s_K$ followed by the simple reflection $s_{K-1}$, etc. Here $1 \leq i_j \leq n$.

For each reduced-word expression for $w_0$ there is a corresponding seed for the cluster algebra on $B^-$. The $B$-matrix for this seed can encoded via a quiver. This quiver will have $n+K$ vertices, of which $2n$ are frozen edge vertices. Each vertex of the quiver is associated to one of the nodes of the Dynkin diagram. If the simple reflection $s_i$ occurs $a_i$ times in the reduced-word for $w_0$, there will be $a_i+1$ vertices belonging to the node $i$. Of these, two vertices (the first and the last) will be frozen.

We will arrange the vertices of the quiver in horizontal rows, where each row is labelled by a node in the Dynkin diagram, and all the vertices in that row are associated to that node in the Dynkin diagram.

The multipliers $d_i$ for the vertices are determined by the nodes they are associated with. If a node in the Dynkin diagram is associated with a short root, the multipliers for the vertices belong to this node are $1$. If a node in the Dynkin diagram is associated with a long root, the multipliers for the vertices belong to this node are $2$ in types $B, C, F$ and $3$ in type $G$. Alternatively, let the node $i$ correspond to the root $\alpha_i$, and let us normalize the lengths of the roots so that the short roots have length $1$. Then the multipliers for the vertices belonging to node $i$ are $(\textrm{length of } \alpha_i)^{2}$.

We have described the vertices of the quiver and the multipliers for these vertices. To describe the arrows of the quiver, we will glue the quiver together out of smaller pieces, one for each simple reflection $s_{i_j}$ occuring in our reduced word for $w_0$.

The example of $SL_4$ well serve as a running example.

One reduced word for $w_0 \in W_{SL_4}$ is 
$$s_1 s_2 s_1 s_3 s_2 s_1.$$
The corresponding quiver is for $B^-$ is

\begin{center}
\begin{tikzpicture}[scale=2]

  \node (x310) at (-0.5,0.9) {$\bullet$};
  \node (x301) at (0.5,0.9) {$\bullet$};

  \node (x220) at (-1,0) {$\bullet$};
  \node (x211) at (0,0) {$\bullet$};
  \node (x202) at (1,0) {$\bullet$};

  \node (x130) at (-1.5,-0.9) {$\bullet$};
  \node (x121) at (-0.5,-0.9) {$\bullet$};
  \node (x112) at (0.5,-0.9) {$\bullet$};
  \node (x103) at (1.5,-0.9) {$\bullet$};

%  \node (x031) at (-1,-1.8) {$\bullet$};
%  \node (x022) at (0,-1.8) {$\bullet$};
%  \node (x013) at (1,-1.8) {$\bullet$};

  \draw [->, postaction={decorate}] (x301) -- (x310) node [midway, above] {$s_3$};
  \draw [->, postaction={decorate}] (x202) -- (x211) node [midway, above] {$s_2$};
  \draw [->, postaction={decorate}] (x211) -- (x220) node [midway, above] {$s_2$};
  \draw [->, postaction={decorate}] (x103) -- (x112) node [midway, above] {$s_1$};
  \draw [->, postaction={decorate}] (x112) -- (x121) node [midway, above] {$s_1$};
  \draw [->, postaction={decorate}] (x121) -- (x130) node [midway, above] {$s_1$};
%  \draw [->, dashed] (x013) to (x022);
%  \draw [->, dashed] (x022) to (x031);

%  \draw [->] (x013) to (x103);
%  \draw [->] (x022) to (x112);
  \draw [->] (x112) to (x202);
%  \draw [->] (x031) to (x121);
  \draw [->] (x121) to (x211);
  \draw [->] (x211) to (x301);
  \draw [->, dashed] (x130) to (x220);
  \draw [->, dashed] (x220) to (x310);

%  \draw [->] (x130) to (x031);
  \draw [->] (x220) to (x121);
%  \draw [->] (x121) to (x022);
  \draw [->] (x310) to (x211);
  \draw [->] (x211) to (x112);
%  \draw [->] (x112) to (x013);
  \draw [->, dashed] (x301) to (x202);
  \draw [->, dashed] (x202) to (x103);

\draw[yshift=-1.5cm]
  node[below,text width=6cm] % ,style=information text --- don't know this one
  {
  Figure 1. The quiver corresponding to the reduced word $s_1 s_2 s_1 s_3 s_2 s_1$ for $B^-_{SL_{4}}$.
  };

\end{tikzpicture}
\end{center}

In the above quiver, there are three rows. The vertices belonging to node $1$ of the Dynkin diagram are on the bottom row; the vertices belonging to node $2$ are in the middle row; and the vertices belonging to node $3$ are on the top row. $SL_n$ is simply-laced, so $d_i=1$ for all vertices. The vertices on either end of each row are frozen vertices.

The leftmost vertices in each row are pulled back from the edge functions for the edge $A_1A_3$. The rightmost vertices in each row are pulled back from the edge functions for the edge $A_2A_3$. Observe that dotted arrows only go between frozen vertices, as required. The flags $A_1,A_2, A_3$ are oriented as follows:

\begin{center}
\begin{tikzpicture}[scale=2]

  \node (A_1) at (-1,0) {$A_1$};
  \node (A_2) at (1,0) {$A_2$};
  \node (A_3) at (0,1.7) {$A_3$};

  \draw [] (A_1) -- (A_2);
  \draw [] (A_2) -- (A_3);
  \draw [] (A_3) -- (A_1);

\end{tikzpicture}
\end{center}

The larger quiver is glued out of smaller pieces, each piece corresponding to an occurence of one of the simple reflections $s_1, s_2, s_3$ in the reduced word for $w_0$. These pieces are depicted in Figure 2:

\begin{center}
\begin{tikzpicture}[scale=2]

  \node (x211) at (-2,0.9) {\Large $\ontop{2}{\bullet}$};

  \node (x121) at (-2.5,0) {\Large $\ontop{1_-}{\bullet}$};
  \node (x112) at (-1.5,0) {\Large $\ontop{1_+}{\bullet}$};
%  \node (12k) at (-2,-1.5) {\Large $\ontop{k^{\circ}}{\bullet}$};

  \draw [->, postaction={decorate}] (x112) -- (x121) node [midway, above] {$s_1$};
  \draw [->, dashed] (x121) to (x211);
  \draw [->, dashed] (x211) to (x112);
%  \draw [->] (x121) to (12k);
%  \draw [->] (12k) to (x112);

  \node (x310) at (0,0.9) {\Large $\ontop{3}{\bullet}$};

  \node (x220) at (-0.5,0) {\Large $\ontop{2_-}{\bullet}$};
  \node (x211) at (0.5,0) {\Large $\ontop{2_+}{\bullet}$};

  \node (x121) at (0,-0.9) {\Large $\ontop{1}{\bullet}$};
%  \node (12k) at (0,-1.5) {\Large $\ontop{k^{\circ}}{\bullet}$};

  \draw [->, postaction={decorate}] (x211) -- (x220) node [midway, above] {$s_2$};

  \draw [->, dashed] (x121) to (x211);
  \draw [->, dashed] (x220) to (x310);

  \draw [->, dashed] (x220) to (x121);
  \draw [->, dashed] (x310) to (x211);

%  \draw [->] (12k) to (x211);
%  \draw [->] (x220) to (12k);

  \node (x310) at (1.5,0) {\Large $\ontop{3_-}{\bullet}$};
  \node (x301) at (2.5,0) {\Large $\ontop{3_+}{\bullet}$};

  \node (x211) at (2,-0.9) {\Large $\ontop{2}{\bullet}$};
%  \node (12k) at (2,-1.5) {\Large $\ontop{k^{\circ}}{\bullet}$};

  \draw [->, postaction={decorate}] (x301) -- (x310) node [midway, above] {$s_3$};
  \draw [->,dashed] (x211) to (x301);
  \draw [->,dashed] (x310) to (x211);
  
%  \draw [->] (12k) to (x301);
%  \draw [->] (x310) to (12k);

\draw[yshift=-1.2cm]
  node[below,text width=6cm] % ,style=information text --- don't know this one
  {
  Figure 2. Piece of the quiver corresponding to the simple reflections $s_1, s_2, s_3$.
  };

\end{tikzpicture}
\end{center}

We glue these these pieces together to obtain the quiver for $B^-$.

Let us now describe the pieces in general. Consider the simple reflection $s_j$. A piece of the quiver will consist of $n+1$ vertices. There will be two vertices $j_-$ and $j_+$ on the $j$-th row (associated to the $j$-th node of the Dynkin diagram). We will picture $j_+$ to the right of $j_-$. There will be one vertex $i$ on the $i$-th row for each every other node $i \neq j$ of the Dynkin diagram. The quiver will have the following arrows:

\begin{itemize}
\item An arrow from $j_+$ to $j_-$.
\item A dotted arrow from $j_-$ to $i$ whenever $i$ and $j$ are adjacent in the Dynkin diagram.
\item A dotted arrow from $i$ to $j_+$ whenever $i$ and $j$ are adjacent in the Dynkin diagram.
\end{itemize}

It is easy to check that for $SL_4$, we get the pieces corresponding to $s_1, s_2, s_3$ as pictured above. In Figure 2 above, we have not pictured vertices that have no incoming or outgoing arrows.

We can now describe a quiver associated to any reduced word $u$, for any $u \in W$. We use the following rule: Suppose that $u=u_1u_2$. Then we take the quiver for $u_1$ and put it to the left of the quiver for $u_2$, then identify the rightmost vertices of the quiver for $u_1$ with the leftmost vertices of the quiver for $u_2$. When we perform this gluing, two dotted arrows in the same direction glue to give us a solid arrow, whereas two dotted arrows in the opposite direction cancel to give us no arrow. (In the $SL_4$ example above, it turns out we never glue dotted arrows going in the opposite direction, but this does happen in general.) This is the process of amalgamation. We use this procedure to get a quiver for any reduced word for $w_0$.

Let us now describe the functions attached to the vertices of the quiver. The functions are given by \emph{generalized minors} of $B^-$, which we now define. Let $G_0 = U^- H U^+ \subset G$ be the open subset of $G$ consisting of elements that have a Gaussian decomposition $x=[x]_- [x]_0 [x]_+$. Then for any two elements $u, v \in W$, and any fundamental weight $\omega_i$, we have the {\em generalized minor} $\Delta_{u\omega_i, v\omega_i}(x)$ defined by

$$\Delta_{u\omega_i, v\omega_i}(x) := ([\overline{u}^{-1}x \overline{v}]_0)^{\omega_i}.$$
The formula gives a well-defined value when $\overline{u}^{-1}x \overline{v} \in G_0$, but may have poles elsewhere.

We start with a reduced word for $w_0$:
$$w_0=s_{i_1} s_{i_2} s_{i_3} \cdots s_{i_{K-1}} s_{i_K}$$
For $1 \leq l \leq K$, we have the subword $$u_l:=s_{i_1}\dots s_{i_l}.$$ One can take $u_0=e$.
In our situation, we are interested in the generalized minors $\Delta_{u\omega_i, v\omega_i}(x)$ when $v=e$ and $u=u_{l}=s_{i_1} s_{i_2} s_{i_3} \cdots s_{i_l}$ for $0 \leq l \leq K$.

Write $w_0=u_l u_l^c$. Then we can amalgamate the quiver from two parts: $u_l$ on the left and $u_l^c$ on the right. The whole quiver is obtained from indentifying the rightmost vertices of the quiver for $u_l$ with the leftmost vertices for the quiver for $u_l^c$. There are $n$ vertices which get identified, one on each level. To these vertices we attach the function 
$$\Delta_{u_{l}\omega_{i}, \omega_{i}}.$$
for $i=1, \dots, n$. Many vertices in the quiver will be defined by more than one generalized minor, but the expressions will all agree.

%Suppose that in the reduced word for $w_0$ that we have chosen, there are $a_i$ occurences of the simple reflection $s_i$. Then the quiver contains $a_i+1$ vertices belonging to the node $i$ of the Dynkin diagram. Let us call these vertices $x_{i0}, x_{i1}, x_{i2}, \dots, x_{ia_i}$. If $s_{i_l}$ is the $j^{th}$ occurence of the simple reflection $s_i$ in the reduced word for $w_0$, then the function $\Delta_{u_{l}\omega_{i_l}, \omega_{i_l}},$ will be attached to the vertex $x_{ij}$.

The leftmost function in each row will be $\Delta_{\omega_i, \omega_i}$. These are the edge functions for the edge $A_1A_3$. The rightmost function in each row will be $\Delta_{w_0\omega_i, \omega_i}$. These are the edge functions for the edge $A_2A_3$. All the remaining functions are face functions.

In our example of $G=SL_4$, the generalized minors are attached to the vertices of the quiver as follows:

\begin{center}
\begin{tikzpicture}[scale=2.4]

  \node (x103) at (-0.5,0.9) {$\Delta_{\omega_3, \omega_3}$};
  \node (x013) at (0.5,0.9) {$\Delta_{s_1 s_2 s_1 s_3\omega_3, \omega_3}$};

  \node (x202) at (-1,0) { $\Delta_{\omega_2, \omega_2}$};
  \node (x112) at (0,0) { $\Delta_{s_1 s_2\omega_2, \omega_2}$};
  \node (x022) at (1,0) { $\Delta_{s_1 s_2 s_1 s_3 s_2\omega_2, \omega_2}$};

  \node (x301) at (-1.5,-0.9) { $\Delta_{\omega_1, \omega_1}$};
  \node (x211) at (-0.5,-0.9) { $\Delta_{s_1\omega_1, \omega_1}$};
  \node (x121) at (0.5,-0.9) { $\Delta_{s_1 s_2 s_1\omega_1, \omega_1}$};
  \node (x031) at (1.5,-0.9) { $\Delta_{w_0\omega_1, \omega_1}$};

  \draw [->] (x013) to (x103);
  \draw [->] (x022) to (x112);
  \draw [->] (x112) to (x202);
  \draw [->] (x031) to (x121);
  \draw [->] (x121) to (x211);
  \draw [->] (x211) to (x301);

  \draw [->] (x121) to (x022);
  \draw [->] (x211) to (x112);
  \draw [->] (x112) to (x013);
  \draw [->, dashed] (x301) to (x202);
  \draw [->, dashed] (x202) to (x103);

  \draw [->] (x202) to (x211);
  \draw [->] (x103) to (x112);
  \draw [->] (x112) to (x121);
  \draw [->, dashed] (x013) to (x022);
  \draw [->, dashed] (x022) to (x031);

\draw[yshift=-1.5cm]
  node[below,text width=6cm] % ,style=information text --- don't know this one
  {
  Figure 3. Generalized minors give the cluster variables for $B^- \subset SL_4$.
  };

\end{tikzpicture}
\end{center}

\subsection{The cluster structure on $\Conf_3 \A_G$}

To extend the cluster structure on $B^-$ to $\Conf_3 \A_G$, we first need to realize generalized minors as being pulled back from functions on $\Conf_3 \A_G$. Recall that the functions on $\Conf_3 \A_G$ are given by invariants of triple tensor products:
$$\mathcal{O}(\Conf_3 \A_G) \simeq \bigoplus [V_{\lambda} \otimes V_{\mu} \otimes V_{\nu}]^G,$$
where the sum is over all triples of dominant weights $(\lambda, \mu, \nu)$.

A direct calculation shows that the functions along the edges $A_1A_3$ and $A_2A_3$ are given by the canonical invariant in the tensor products
$$[V_{\omega_i} \otimes \mathbbm{1} \otimes V_{\omega_i^{*}}]^G,$$
$$[\mathbbm{1} \otimes V_{\omega_i} \otimes V_{\omega_i^{*}}]^G,$$
respectively. Here $\omega_i$ are the fundamental weights, and for any weight $\lambda$, we define $\lambda^{*}$ to be the weight corresponding to the representation dual to $V_{\lambda}$. In other words, $\lambda^{*}=-w_0(\lambda)$. Here, $\mathbbm{1}$ is, of course, the trivial representation.

Naturally, we will take the edge functions for the edge $A_1A_2$, to be invariants in
$$[V_{\omega_i} \otimes V_{\omega_i^{*}} \otimes \mathbbm{1}]^G.$$

Now let us deal with the face functions. For any $u \in W$, we can write
$$u\omega_i = \sum r_i \omega_i,$$
for some integers $r_i$.

For an integer $r$, define $r^+=\max(0,r)$ and $r^-=\min(0,r)$. For any weight $\lambda = \sum r_i \omega_i$, we can define $\lambda^+ = \sum r_i^+ \omega_i$ and $\lambda^- = \sum r_i^- \omega_i$.

Let us set $\lambda = -w_0 (u\omega_i)^+$ and $\mu = -(u\omega_i)^-$. Then we will have that $u\omega_i = -w_0 \lambda - \mu.$

A more involved calculation gives the following formula, which will be important for us:
\begin{prop} \label{grading} For any $u \in W$,
$$\Delta_{u \omega_{i}, \omega_{i}} \in  [V_{\lambda} \otimes V_{\mu} \otimes V_{\omega_i}]^G$$
where $\lambda$ and $\mu$ are chosen as above so that 
$u\omega_i = -w_0 \lambda - \mu.$
\end{prop}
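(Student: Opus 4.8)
The plan is to prove the statement by computing the $H^3$-weight of $\Delta_{u\omega_i,\omega_i}$ viewed as a function on $\Conf_3\A_G$, and then invoking the fact that the decomposition $\mathcal{O}(\Conf_3\A_G)\simeq\bigoplus[V_\lambda\otimes V_\mu\otimes V_\nu]^G$ is precisely the decomposition into weight spaces for the three commuting right $H$-actions on the three flags. Membership in a single summand is therefore detected entirely by the triple of $H$-weights, so the whole problem reduces to a weight computation together with a check that the two weights appearing in slots $1$ and $2$ are dominant, so that they are genuinely highest weights $\lambda,\mu$ and not merely constrained by a linear relation.

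First I would record the transformation law of the generalized minor under the two-sided torus action. Directly from $\Delta_{u\omega_i,\omega_i}(x)=([\overline u^{-1}x]_0)^{\omega_i}$, together with the fact that conjugating $h_1$ past $\overline u^{-1}$ applies a Weyl-group twist while $h_2$ passes straight into the Cartan part, one obtains $\Delta_{u\omega_i,\omega_i}(h_1 x h_2)=h_1^{u\omega_i}\,h_2^{\omega_i}\,\Delta_{u\omega_i,\omega_i}(x)$ for $h_1,h_2\in H$. This is the only analytic input; everything else is bookkeeping with the embedding $i$.

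Next I would transport these weights to $\Conf_3\A_G$ through $i(b)=(U^-,\overline{w_0}U^-,b\,\overline{w_0}U^-)$. The key observation is that the right $H$-action on the free (third) flag preserves the slice $i(B^-)$: acting by $h_3$ sends $i(b)$ to $i(b\cdot(w_0\cdot h_3))$ with no gauge transformation needed, so the third weight reads off immediately from the transformation law and produces the factor $V_{\omega_i}$ (in the convention fixed by the edge functions). Left multiplication of $b$ by $h_1$, by contrast, corresponds after a single Cartan gauge transformation to acting on the first flag by $h_1^{-1}$ and simultaneously on the second flag by $w_0\cdot h_1^{-1}$; comparing with the transformation law then yields exactly one linear relation between the first and second weights, which is the relation equivalent to $u\omega_i=-w_0\lambda-\mu$.

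The main obstacle is that the slice $i(B^-)$ rigidly fixes the first two flags, so no torus action isolates the first weight from the second: the computation above produces only the single relation $u\omega_i=-w_0\lambda-\mu$, not the individual values. To separate them I would realize $\Delta_{u\omega_i,\omega_i}$ explicitly as a matrix coefficient of $V_{\omega_i}$ and decompose the action of $\overline u^{-1}$ on the extremal weight vector according to the dominant and anti-dominant directions of $u\omega_i$: writing $u\omega_i=\sum r_j\omega_j$, the directions with $r_j>0$ feed the first framing flag and those with $r_j<0$ feed the second, which is exactly the content of the splitting $u\omega_i=(u\omega_i)^+ + (u\omega_i)^-$ and the assignments $\lambda=-w_0(u\omega_i)^+$, $\mu=-(u\omega_i)^-$. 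Since $(u\omega_i)^+$ is dominant and $(u\omega_i)^-$ is anti-dominant, both $\lambda$ and $\mu$ are dominant, so they are legitimate highest weights and the weight computation pins down the unique summand $[V_\lambda\otimes V_\mu\otimes V_{\omega_i}]^G$. Keeping the various sign conventions mutually consistent (the choice of $G/U$ versus $G/U^-$, the dualization implicit in $\mathcal{O}(\A_G)$, and the $w_0$-twists coming from the factors of $\overline{w_0}$ in $i$) is the fiddly part, and I would calibrate all of them against the already-known edge functions $\Delta_{\omega_i,\omega_i}$ and $\Delta_{w_0\omega_i,\omega_i}$.
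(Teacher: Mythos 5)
Your reduction of the problem to a torus-weight computation is sound as far as it goes, and you correctly identify the essential obstacle: because the embedding $i$ freezes the first two flags, the torus actions available on the slice $i(B^-)$ determine only the third weight and the single relation $u\omega_i=-w_0\lambda-\mu$, never $\lambda$ and $\mu$ individually. This is not an artifact of your method but a real feature of the problem: as the paper notes immediately after Proposition \ref{grading}, multiplying by the $A_1A_2$ edge functions replaces $(\lambda,\mu,\omega_i)$ by $(\lambda+\nu,\mu+\nu^*,\omega_i)$ without changing the pullback to $B^-$, and every member of this family consists of dominant weights satisfying your relation. Consequently your closing claim --- that dominance of $\lambda$ and $\mu$ together with the weight computation ``pins down the unique summand'' --- is false as stated: nothing you have computed distinguishes the minimal member of this family from any other.

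The step you offer to break this degeneracy (``the directions with $r_j>0$ feed the first framing flag and those with $r_j<0$ feed the second'') is a restatement of the desired conclusion, not an argument: no operation on matrix coefficients is specified that routes $(u\omega_i)^+$ into the first tensor slot and $(u\omega_i)^-$ into the second. To see what is actually at stake, write a function in the $(\lambda,\mu,\omega_i)$-graded piece as $F(g_1U^-,g_2U^-,g_3U^-)=W(g_1w_\lambda\otimes g_2w_\mu\otimes g_3w_{\omega_i})$, where $W$ is an invariant functional and $w_\lambda, w_\mu, w_{\omega_i}$ are lowest-weight vectors. Then $i^*F(b)$ is the partial evaluation of $W$ at $w_\lambda$ and $\overline{w_0}w_\mu$, applied to $b\,\overline{w_0}w_{\omega_i}$; weight considerations force this partial evaluation to be a functional on $V_{\omega_i}$ supported on the one-dimensional extremal weight space of weight $u\omega_i$, so $i^*F$ is \emph{automatically} proportional to $\Delta_{u\omega_i,\omega_i}$, whatever $W$ is. The entire content of Proposition \ref{grading} is therefore that this partial evaluation is \emph{nonzero} for the specific pair $\lambda=-w_0(u\omega_i)^+$, $\mu=-(u\omega_i)^-$, i.e.\ that the projection of the extremal vector $w_\lambda\otimes\overline{w_0}w_\mu$ onto the $V_{\omega_i^*}$-isotypic component of $V_\lambda\otimes V_\mu$ does not vanish. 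That is a PRV-type nonvanishing statement, and your proposal never engages with it; the hard part is this, not the sign conventions. For comparison, the paper does not attempt a direct argument either: it cites the case-by-case verifications of \cite{Le}, \cite{Le2}, observes that checking one reduced word suffices because the formula is stable under mutation, and defers the general case to work of Goncharov and Shen. Your strategy, if the nonvanishing step were supplied, would give a uniform proof avoiding case analysis --- but that missing step is precisely where the theorem lives.
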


\begin{proof} This formula was verified for groups $G$ of type $A, B, C, D, G$ in \cite{Le}, \cite{Le2}. If one checks the formula for one reduced word for $w_0$ for a group $G$, it is simple to check that the formula is stable under mutation, and hence holds for all reduced words. A general proof will appear in work of Goncharov and Shen.
\end{proof}

Let us say a few words about this. There are many functions on $\Conf_3 \A_G$ that can be pulled back to give the function $\Delta_{u \omega_{i}, \omega_{i}}$. 

On the image of $B^-$, the edge functions for the edge $A_1A_2$ all take the value $1$. Thus we can multiply the tensor invariant in
$$[V_{\lambda} \otimes V_{\mu} \otimes V_{\omega_i}]^G$$
which pulls back to $\Delta_{u \omega_{i}, \omega_{i}}$ by any combination of those edge functions. As a result we can obtain, for any dominant weight $\nu$, a tensor invariant in
$$[V_{\lambda + \nu} \otimes V_{\mu+ \nu^*} \otimes V_{\omega_i}]^G$$ which also pulls back to $\Delta_{u \omega_{i}, \omega_{i}}$. In fact, all functions on $\Conf_3 \A_G$ which are tensor invariants and pull back to $\Delta_{u \omega_{i}, \omega_{i}}$ are obtained in this way. The tensor invariant in $[V_{\lambda} \otimes V_{\mu} \otimes V_{\omega_i}]^G$ is therefore the tensor invariant of minimal weight which pulls back to give the generalized minor $\Delta_{u \omega_{i}, \omega_{i}}$.

We can now define all the functions in the cluster structure for $\Conf_3 \A_G$. The edge variables will be given by the canonical invariants in 
$$[V_{\omega_i} \otimes \mathbbm{1} \otimes V_{\omega_i^{*}}]^G,$$
$$[\mathbbm{1} \otimes V_{\omega_i} \otimes V_{\omega_i^{*}}]^G,$$
$$[V_{\omega_i} \otimes V_{\omega_i^{*}} \otimes \mathbbm{1}]^G.$$
The generalized minors will extend to $\Conf_3 \A_G$ by taking the tensor invariant in $$[V_{\lambda} \otimes V_{\mu} \otimes V_{\omega_i}]^G$$
which pulls back to $\Delta_{u \omega_{i}, \omega_{i}}$ as we described.

We now need to describe the quiver for the cluster structure on $\Conf_3 \A_G$. The quiver for $B^-$ gives us all the multipliers and arrows for the vertices corresponding to face functions or edge functions for the edges $A_2A_3$ and $A_1A_3$.

Let us specify the multipliers for the edge $A_1A_2$. The multiplier for the function in the invariant space $[V_{\omega_i} \otimes V_{\omega_i^{*}} \otimes \mathbbm{1}]^G$ is given by the multiplier for the $i$-th node of the Dynkin diagram, which we recall was $(\textrm{length of } \alpha_i)^{2}$.

We now give a description of the quiver. Recall that the quiver for $B^-$ was glued together out of pieces in a way specified by a choice of a reduced word $w_0=s_{i_1} s_{i_2} s_{i_3} \cdots s_{i_{K-1}} s_{i_K}$.
Recall that we also have the sequence of subwords $u_l:=s_{i_1}\dots_{i_l}$ for $0 \leq l \leq K$. Recall that the Weyl group permutes the set of roots of $G$. Suppose that $\alpha$ is a positive root. When we consider the image of $\alpha$ under the sequence $e=u_0^{-1}, u_1^{-1}, \dots, u_K^{-1} = w_0^{-1}=w_0$, each root in the sequence we obtain is either positive or negative.

\begin{fact} If $\alpha$ is a positive root, the sequence of roots $u_i^{-1}\alpha$ starts out positive, and once a negative root occurs, the rest of the roots are negative. Moreover, if we consider the action of the sequence $u_i^{-1}$ on the set of positive roots, at each stage, exactly one root switches from being positive to negative.
\end{fact}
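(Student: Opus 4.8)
The plan is to recognize this Fact as a standard statement about inversion sets of the prefixes $u_k$, repackaged in terms of the inverses $u_k^{-1}$. For $w \in W$ write $N(w) = \{\alpha \in \Phi^+ : w(\alpha) \notin \Phi^+\}$ for its inversion set; it is classical that $|N(w)| = \ell(w)$, and that a simple reflection $s_j$ fixes $\Phi^+ \setminus \{\alpha_j\}$ setwise while sending $\alpha_j \mapsto -\alpha_j$. Since $u_k^{-1}\alpha$ fails to be positive exactly when $\alpha \in N(u_k^{-1})$, and since $\ell(u_k^{-1}) = \ell(u_k) = k$ (length is preserved under inversion, and $u_k$ is a reduced prefix of $w_0$ of length $k$), the whole Fact reduces to showing that the inversion sets form a strictly increasing chain
$$\emptyset = N(u_0^{-1}) \subset N(u_1^{-1}) \subset \cdots \subset N(u_K^{-1}) = \Phi^+$$
in which exactly one new positive root is added at each step.

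The key step is the single-step assertion $N(u_{k+1}^{-1}) = N(u_k^{-1}) \sqcup \{\gamma_{k+1}\}$, where $\gamma_{k+1} := u_k(\alpha_{i_{k+1}})$ and I use $u_{k+1}^{-1} = s_{i_{k+1}} u_k^{-1}$. First, $\gamma_{k+1}$ is a positive root: since $u_{k+1} = u_k s_{i_{k+1}}$ is reduced we have $\ell(u_k s_{i_{k+1}}) > \ell(u_k)$, which forces $u_k(\alpha_{i_{k+1}}) \in \Phi^+$. It is newly inverted, because $u_{k+1}^{-1}(\gamma_{k+1}) = s_{i_{k+1}}(\alpha_{i_{k+1}}) = -\alpha_{i_{k+1}}$ is negative while $u_k^{-1}(\gamma_{k+1}) = \alpha_{i_{k+1}}$ is positive, so $\gamma_{k+1} \in N(u_{k+1}^{-1}) \setminus N(u_k^{-1})$.

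For the inclusion $N(u_k^{-1}) \subseteq N(u_{k+1}^{-1})$ I invoke the property of simple reflections above. If $\alpha \in N(u_k^{-1})$ then $u_k^{-1}(\alpha)$ is negative, and moreover $u_k^{-1}(\alpha) \neq -\alpha_{i_{k+1}}$, since otherwise $\alpha = -\gamma_{k+1}$ would be negative, contradicting $\alpha \in \Phi^+$. Hence $u_{k+1}^{-1}(\alpha) = s_{i_{k+1}}\bigl(u_k^{-1}(\alpha)\bigr)$ is again negative, so $\alpha \in N(u_{k+1}^{-1})$. Combining this inclusion with the length count $|N(u_{k+1}^{-1})| = k+1 = |N(u_k^{-1})| + 1$ forces $N(u_{k+1}^{-1}) = N(u_k^{-1}) \sqcup \{\gamma_{k+1}\}$, which is precisely the statement that exactly one positive root switches from positive to negative at each stage. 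The nestedness of the chain gives the first assertion for free: for fixed $\alpha$, the condition ``$u_k^{-1}\alpha$ is negative'' is equivalent to $\alpha \in N(u_k^{-1})$, hence once true it persists for all larger $k$, and it is false at $k=0$ since $u_0 = e$. Finally $N(u_K^{-1}) = N(w_0) = \Phi^+$ because $\ell(w_0) = K = |\Phi^+|$, so every positive root is inverted exactly once and the $\gamma_k$ enumerate $\Phi^+$.

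I expect the only genuine subtlety to be the inclusion step, where one must exclude the single exceptional root $-\alpha_{i_{k+1}}$ on which $s_{i_{k+1}}$ fails to preserve negativity; the positivity of $\alpha$ is exactly what rules it out. Everything else is bookkeeping with the standard length–inversion dictionary and the length-additivity of reduced prefixes of $w_0$.
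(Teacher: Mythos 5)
Your proof is correct and complete. Note that the paper gives no proof of this Fact at all: it is stated as a known result from Coxeter group theory, so there is no argument of the paper's to compare yours against. What you have written is the standard inversion-set argument that the paper implicitly relies on: the length count $|N(w)| = \ell(w)$ together with length-additivity of reduced prefixes gives $|N(u_{k+1}^{-1})| = |N(u_k^{-1})| + 1$; the criterion $\ell(w s_j) > \ell(w) \iff w(\alpha_j) \in \Phi^+$ identifies the unique new inversion $\gamma_{k+1} = u_k(\alpha_{i_{k+1}})$; and the fact that $s_j$ permutes $\Phi^- \setminus \{-\alpha_j\}$ yields the nesting $N(u_k^{-1}) \subseteq N(u_{k+1}^{-1})$, which is exactly the ``once negative, stays negative'' half of the Fact. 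The one genuinely delicate point --- excluding the possibility $u_k^{-1}(\alpha) = -\alpha_{i_{k+1}}$ in the inclusion step --- you handle correctly, since that would force $\alpha = -\gamma_{k+1}$ to be a negative root. The final counting step, $N(w_0) = \Phi^+$ because $\ell(w_0) = |\Phi^+|$, also correctly delivers the implicit claim that the roots $\gamma_1, \dots, \gamma_K$ enumerate all of $\Phi^+$, which is what the paper uses when it associates a sequence of positive roots to the reduced word and defines the indices $t_k$.
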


Thus we can associate to a reduced word  $w_0=s_{i_1} s_{i_2} s_{i_3} \cdots s_{i_{K-1}} s_{i_K}$ a sequence of positive roots (note that $K$ is also the number of positive roots.) Now we can single out those indices $t_1, t_2, \dots, t_n$ at which a \emph{simple} root goes from being positive to negative. Take $t_k$ to be the smallest integer such that $u_{i_{t_k}}^{-1}\alpha_k$ is negative.

The quiver for $\Conf_3 \A_G$ is glued together out of the same pieces as before, except that the pieces corresponding to $s_{i_{t_k}}$ are modified. Suppose that $s_{i_{t_k}} = s_j$. Then the quiver for this simple reflection will consist of $n+2$ vertices. There will be two vertices $j_-$ and $j_+$ on the $j$-th row as before. There will be one vertex $i$ on the $i$-th row for each every other node $i \neq j$ of the Dynkin diagram. Moreover, the quiver will have a vertex $k^{\circ}$, which will correspond to the edge function for the edge $A_1A_2$ in the invariant space $[V_{\omega_k^{*}} \otimes V_{\omega_k} \otimes \mathbbm{1}]^G$

The quiver will have the following arrows:

\begin{itemize}
\item An arrow from $j_+$ to $j_-$.
\item A dotted arrow from $j_-$ to $i$ whenever $i$ and $j$ are adjacent in the Dynkin diagram.
\item A dotted arrow from $i$ to $j_+$ whenever $i$ and $j$ are adjacent in the Dynkin diagram.
\item An arrow from $j_-$ to $k^{\circ}$.
\item An arrow from $k^{\circ}$ to $j_+$.
\end{itemize}

Here is how the pieces $s_1, s_2, s_3$ would be modified if they occured as $s_{i_{t_k}}$:

\begin{center}
\begin{tikzpicture}[scale=2]

  \node (x211) at (-2,0.9) {\Large $\ontop{2}{\bullet}$};

  \node (x121) at (-2.5,0) {\Large $\ontop{1_-}{\bullet}$};
  \node (x112) at (-1.5,0) {\Large $\ontop{1_+}{\bullet}$};
  \node (12k) at (-2,-1.5) {\Large $\ontop{k^{\circ}}{\bullet}$};

  \draw [->, postaction={decorate}] (x112) -- (x121) node [midway, above] {$s_1$};
  \draw [->, dashed] (x121) to (x211);
  \draw [->, dashed] (x211) to (x112);
  \draw [->] (x121) to (12k);
  \draw [->] (12k) to (x112);

  \node (x310) at (0,0.9) {\Large $\ontop{3}{\bullet}$};

  \node (x220) at (-0.5,0) {\Large $\ontop{2_-}{\bullet}$};
  \node (x211) at (0.5,0) {\Large $\ontop{2_+}{\bullet}$};

  \node (x121) at (0,-0.9) {\Large $\ontop{1}{\bullet}$};
  \node (12k) at (0,-1.5) {\Large $\ontop{k^{\circ}}{\bullet}$};

  \draw [->, postaction={decorate}] (x211) -- (x220) node [midway, above] {$s_2$};

  \draw [->, dashed] (x121) to (x211);
  \draw [->, dashed] (x220) to (x310);

  \draw [->, dashed] (x220) to (x121);
  \draw [->, dashed] (x310) to (x211);

  \draw [->] (12k) to (x211);
  \draw [->] (x220) to (12k);

  \node (x310) at (1.5,0) {\Large $\ontop{3_-}{\bullet}$};
  \node (x301) at (2.5,0) {\Large $\ontop{3_+}{\bullet}$};

  \node (x211) at (2,-0.9) {\Large $\ontop{2}{\bullet}$};
  \node (12k) at (2,-1.5) {\Large $\ontop{k^{\circ}}{\bullet}$};

  \draw [->, postaction={decorate}] (x301) -- (x310) node [midway, above] {$s_3$};
  \draw [->,dashed] (x211) to (x301);
  \draw [->,dashed] (x310) to (x211);
  
  \draw [->] (12k) to (x301);
  \draw [->] (x310) to (12k);

\draw[yshift=-2cm]
  node[below,text width=6cm] % ,style=information text --- don't know this one
  {
  Figure 4. Modified pieces of the quiver for $s_1, s_2, s_3$ when a simple root changes sign for $\Conf_3 \A_G$.
  };

\end{tikzpicture}
\end{center}

As before, we can amalgamate the pieces corresponding to the simple reflections in the reduced word decomposition for $w_0$, making sure to use the modified pieces for the $s_{i_{t_k}}$. The last step is that we need to add some arrows between the vertices $k^{\circ}$. We add a dotted arrow from $k^{\circ}$ to $k'^{\circ}$ whenever nodes $k$ and $k'$ are adjacent in the Dynkin diagram for $G$, and $t_{k'} < t_k$. This completes the construction of the quiver for $\Conf_3 \A_G$.

\section{The quiver determines tensor invariants}

In this section, we explain how the structure of the quiver for $\Conf_3 \A_G$, as described above via the process of amalgamation, determines the tensor invariant spaces that the cluster variables lie in.

Suppose that $(A_1,A_2,A_3) \in \Conf_3 \A_G$ is a configuration of three principal flags. Let $\B_G : = G/B$ denote the usual flag variety. There is a natural map $\pi: \A_G \rightarrow \B_G$. Let $B_i=\pi(A_i)$. A pair consisting of a principal flag and a regular flag which are in generic position with respect to each other determines a frame for the group $G$. Thus we may consider two frames $(A_3.B_1)$ and $(A_3,B_2)$. Let us arrange so that $B_2$ is the standard flag (i.e., it is stabilized by $B^+$) and $A_3$ is the standard opposite flag (it is stabilized by $U^-$). Then there is a unique element $\gamma \in G$ such that 
$$\gamma \cdot (A_3.B_1) = (A_3,B_2).$$
We wish to compute $\gamma$.

For each letter $s_{i_{l}}$ in the reduced word for $w_0$, let us consider the piece of the quiver corresponding to that letter. As before, take $t_k$ to be the smallest integer such that $u_{i_{t_k}}^{-1}\alpha_k$ is negative. First suppose that $l \neq t_k$ for any $k$. Moreover, suppose that $i_{l}=j$. Then the piece of the quiver consists vertices $j_i$, $j_+$ as well as vertices $i$ for all $i \neq j$. Let $a_{j_-}, a_{j_+}, a_{i}$ be the corresponding cluster variables. Then we can define
\begin{equation} \label{beqn} b_l := \frac{\prod_{i \neq j} a_{i}^{-c_{ij}}}{a_{j_-}a_{j_+}}.
\end{equation}
Here, $c_{ij}$ is the entry of the Cartan matrix, $2\frac{\langle \alpha_i, \alpha_j\rangle}{\langle \alpha_i, \alpha_i \rangle}$. Thus $-c_{ij}=0, 1, 2$ or $3$.

Now suppose that $l = t_k$. The modified quiver has an additional vertex $k^{\circ}$. Then we define
\begin{equation} \label{beqn2} b_l := \frac{a_{k^{\circ}}\prod_{i \neq j} a_{i}^{-c_{ij}}}{a_{j_-}a_{j_+}}.
\end{equation}

Now consider the product
$$\prod_{l} F_{i_l^*}(b_l) = F_{i_1^*}(b_1)F_{i_2^*}(b_2) \cdots F_{i_N^*}(b_N).$$
Here $F_i \in U^-$ is the Cartan generator associated to the $i$-th node in the Dynkin diagram, and if $i$ is a node in the Dynkin diagram corresponding to the fundamental representation $V$, then $i^*$ is the node correponding to the representation $V^*$. We can now give a formula for $\gamma$.

\begin{theorem} \label{factorization} In the notation above, if $\gamma \cdot (A_3.B_1) = (A_3,B_2)$, then
$$\gamma = \prod_{l} F_{i_l^*}(b_l).$$
\end{theorem}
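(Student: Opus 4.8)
The plan is to identify $\gamma$ as a unipotent element and to read off its factorization from the theory of double Bruhat cells. First I would observe that since $\gamma$ fixes the principal flag $A_3$, and the stabilizer of a principal flag in $G$ is a conjugate of the maximal unipotent, the normalization "$A_3$ is stabilized by $U^-$'' forces $\gamma \in U^-$; this is consistent with the claimed answer, as each $F_{i_l^*}(b_l) = \exp(b_l f_{i_l^*})$ lies in $U^-$. The remaining condition $\gamma \cdot B_1 = B_2$ then says precisely that $\gamma$ is the unique element of $U^-$ that \emph{straightens} the regular flag $B_1$ to the standard flag $B_2$. So the theorem amounts to computing the coordinates of this straightening element with respect to a product of root subgroups indexed by the reduced word.

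Next I would connect the configuration to the double Bruhat cell picture of \cite{BFZ}. Using the map $i$ of Section 3, a configuration in the open locus corresponds to an element $b \in G^{w_0,e} \subset B^-$, and the straightening unipotent $\gamma$ is exactly the unipotent factor that arises when one brings $B_1$ into standard position along the gallery determined by the reduced word $s_{i_1}\cdots s_{i_K}$. Because $B_1$ and $A_3$ are in generic (relative position $w_0$) position, each simple reflection $s_{i_l}$ in a reduced word for $w_0$ contributes one elementary factor, living in the rank-one subgroup attached to the appropriate root. The appearance of the dual index $i_l^*$ rather than $i_l$ is the standard consequence of conjugating by $\overline{w_0}$: conjugation by $\overline{w_0}$ carries the root subgroup for $\alpha_i$ to the one for $\alpha_{i^*}$, and this twist is what intervenes between the $B^-$-factorization and the straightening of flags relative to the $U^-$-fixed flag $A_3$.

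With the factorization $\gamma = \prod_l F_{i_l^*}(t_l)$ in hand, the core step is to evaluate the parameters $t_l$. Here I would invoke the Chamber Ansatz of \cite{BFZ}, which expresses each $t_l$ as a Laurent monomial in the chamber minors of $b$. These chamber minors are precisely the generalized minors $\Delta_{u_l\omega_i,\omega_i}$ attached to the vertices of the quiver in Section 3, i.e. the cluster variables $a_\bullet$. For a position $l$ with $i_l = j$, the Chamber Ansatz produces the quotient of $\prod_{i\neq j} a_i^{-c_{ij}}$ (over the neighboring rows $i$, the exponent $-c_{ij}$ being nonzero only for nodes adjacent to $j$) by the product $a_{j_-}a_{j_+}$ of the same-row minors immediately before and after position $l$. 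This is exactly the expression \eqref{beqn} for $b_l$, so $t_l = b_l$ whenever $l \neq t_k$.

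Finally I would pass from $i(B^-)$ to all of $\Conf_3 \A_G$ and install the correction at the special positions $t_k$ where the simple root $\alpha_k$ changes sign. On the image $i(B^-)$ the edge variables $a_{k^\circ}$ for the edge $A_1A_2$ all equal $1$, so \eqref{beqn2} reduces to \eqref{beqn} and the previous paragraph already establishes the formula there. For a general configuration I would track how the extra vertex $k^\circ$ enters: using Proposition \ref{grading} to control the weights of the relevant tensor invariants, the factorization parameter at position $t_k$ acquires the factor $a_{k^\circ}$ in its numerator, yielding \eqref{beqn2}. The main obstacle I anticipate is the bookkeeping rather than any single hard estimate: one must pin down all the conventions (left versus right action, which flag is taken standard, and the reversal and dualization $i_l \mapsto i_l^*$ of the word), and one must verify carefully that the edge variable $a_{k^\circ}$ enters at exactly the positions $t_k$ when extending the Chamber Ansatz from $B^-$ to $\Conf_3 \A_G$.
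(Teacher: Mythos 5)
Your first three paragraphs follow the same route as the paper's proof: pin down $\gamma$ as the unique element of $U^-$ carrying $B_1$ to $B_2$, pass through the embedding $i$ of $G^{w_0,e}$ into $\Conf_3 \A_G$, and observe that on the image of $i$ --- where every $a_{k^\circ}=1$ --- the claimed product formula is exactly the twist/Chamber-Ansatz factorization of \cite{BFZ}. Up to that point there is no disagreement with the paper.

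The genuine gap is in your final step, the extension from $i(B^-)$ to all of $\Conf_3 \A_G$. Saying that ``the factorization parameter at position $t_k$ acquires the factor $a_{k^\circ}$ in its numerator'' is a restatement of \eqref{beqn2}, i.e.\ of the conclusion, not an argument; indeed your closing sentence concedes that verifying that $a_{k^\circ}$ enters at exactly the positions $t_k$ is the remaining task, and that is precisely the part of Theorem \ref{factorization} not already contained in \cite{BFZ}. The paper closes this gap with an equivariance argument that your proposal never articulates. There is an action of $H\times H\times e \subset H^3$ on $\Conf_3 \A_G$ rescaling the decorations of $A_1$ and $A_2$. First, $\gamma$ is invariant under this action, because it depends only on the underlying flags $B_1, B_2 \in \B_G$ and not on the decorations. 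Second, every generic configuration can be moved by this action into the slice $a_{k^\circ}=1$, i.e.\ into $i(G^{w_0,e})$, where the BFZ formula applies. Third --- and this is where the weights actually enter --- the product $\prod_l F_{i_l^*}(b_l)$ is itself invariant under $H\times H\times e$, because each $b_l$, including its $a_{k^\circ}$ factor at the positions $l=t_k$, lies in the graded piece $(0,0,-\alpha_{i_l})$; this is exactly the content of Proposition \ref{recursive}, which the paper then deduces from Proposition \ref{grading}. Two quantities invariant under a group action, agreeing on a slice that meets every orbit, agree everywhere. So the role of Proposition \ref{grading} is not to ``track'' how $a_{k^\circ}$ enters the Chamber Ansatz; it is to prove the weight identity that makes the full expression \eqref{beqn2} equivariantly rigid, which is what licenses the reduction to the $a_{k^\circ}=1$ locus. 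Without this mechanism (or some substitute for it), your last paragraph assumes what is to be proved.
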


We can also give another equivalent form. Suppose that we arrange that $A_3$ is the standard flag (i.e., it is stabilized by $U^+$) and $B_2$ is the standard opposite flag (it is stabilized by $B^-$). Then there is a unique element $\gamma' \in G$ such that 
$$\gamma' \cdot (A_3.B_2) = (A_3,B_1).$$ 
We have the following formula:
$$\gamma' = E_{i_N}(b_N) \cdots E_{i_1}(b_1).$$

It is straightforward to verify that these formulas are equivalent: Simply use that $\gamma' = \overline{w_0} \gamma^{-1} \overline{w_0}^{-1}$ and $E_{i}(b)=\overline{w_0} F_{i^*}(b)^{-1} \overline{w_0}^{-1}$.

\begin{proof} The factorization formula follows directly from work of Berenstein, Fomin and Zelevinsky, \cite{BFZ}. Let us explain. Recall that there is a map 
$$i: b \in G^{w_0, e} \subset B^- \rightarrow (U^-, \overline{w_0}U^-, b\cdot \overline{w_0}U^-) \in \Conf_3 \A_{G}.$$
The image of the map is those configurations of flags such that the edge variables along the edge $A_1A_2$ (those which we called $a_{k^{\circ}}$ above) are equal to $1$. If we set $a_{k^{\circ}} = 1$ in the formula in Theorem \ref{factorization}, we obtain the formula in \cite{BFZ} for $\gamma$, which the authors call the twist of $b$.

However, there is an $H^3$ action on $\Conf_3 \A_{G}$, and because $\gamma$ only depends on $B_1$ and $B_2$, the action of $H \times H \times e$ leaves $\gamma$ invariant. However, using either one of these $H$ actions, we can always arrange so that $a_{k^{\circ}} = 1$.

Thus we only need to show that the formula \ref{factorization} is invariant under the action of $H \times H \times e$. Note that if we have a function $a \in \cO(\Conf_3 \A_{G})$ such that $a$ lies in a tensor invariant space 
$$a \in [V_{\lambda} \otimes V_{\mu} \otimes V_{\nu}]^G,$$
then the action of $(h_1,h_2,h_3)$ on $a$ is given by multiplication by $\lambda(h_1) \mu(h_2) \nu(h_3)$. We will say that $a$ lies in the $(\lambda, \mu, \nu)$ graded piece of $\cO(\Conf_3 \A_{G})$. Now, let us consider the expressions for $b_l$ in equations \ref{beqn} and \ref{beqn2}. It is sufficient to show that $b_l$ is invariant under the action of $H \times H \times e$. In fact, we can show more. We know how the third copy of $H$ acts, because acting by $h_3$ must conjugate $\gamma$ by $h_3$, so that $b_j$ should be multiplied by $\alpha_{i_j}(h_3)^{-1}$.

\begin{prop}\label{recursive} Let us suppose that $a_{?}$ lies in the graded piece $(\lambda_?, \mu_?, \nu_?)$ for $? = j_-, j_+, i, k^{\circ}$. Then
$$\sum_{i \neq j} c_{ij} (\lambda_i, \mu_i, \nu_i) + (\lambda_{j_-}, \mu_{j_-}, \nu_{j_-}) + (\lambda_{j_+}, \mu_{j_+}, \nu_{j_+}) - (\lambda_{k^{\circ}}, \mu_{k^{\circ}}, \nu_{k^{\circ}})= (0,0,\alpha_j).$$
\end{prop}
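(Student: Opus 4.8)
The plan is to recognize that the stated identity is exactly the assertion that the monomial $b_l$ of \eqref{beqn2} lies in the $(0,0,-\alpha_j)$ graded piece of $\cO(\Conf_3 \A_G)$: reading off the grading of a ratio of cluster variables,
$$\text{grading}(b_l) = (\lambda_{k^\circ},\mu_{k^\circ},\nu_{k^\circ}) - \sum_{i\neq j}c_{ij}(\lambda_i,\mu_i,\nu_i) - (\lambda_{j_-},\mu_{j_-},\nu_{j_-}) - (\lambda_{j_+},\mu_{j_+},\nu_{j_+}),$$
so that $\text{grading}(b_l)=(0,0,-\alpha_j)$ is equivalent to the Proposition. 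Crucially, I would prove this \emph{directly} from Proposition \ref{grading} and the combinatorics of $W$, and \emph{not} from Theorem \ref{factorization}, whose proof invokes this very Proposition. Writing $u:=u_{l-1}$, Proposition \ref{grading} supplies the gradings: the two row-$j$ vertices carry $\Delta_{u\omega_j,\omega_j}$ and $\Delta_{us_j\omega_j,\omega_j}$ (entering the identity symmetrically, both with third component $\omega_j$); the vertex $i$ carries $\Delta_{u\omega_i,\omega_i}$ with third component $\omega_i$; and $k^\circ$ has grading $(\omega_k^*,\omega_k,0)$. I would record at the outset the defining feature of the case $l=t_k$: the root $u\alpha_j$ is the \emph{simple} root $\alpha_k$ (equivalently $u^{-1}\alpha_k=\alpha_j$), since $t_k$ is the first step at which $\alpha_k$ turns negative.

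The third component is the easy part. Its contribution is $\sum_{i\neq j}c_{ij}\omega_i + \omega_j + \omega_j - 0$, and since $c_{jj}=2$ this equals $\sum_i c_{ij}\omega_i = \alpha_j$ by the standard expansion $\alpha_j=\sum_i c_{ij}\omega_i$.

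For the first two components I would exploit the piecewise-linear structure of Proposition \ref{grading}. Writing $\lambda_?=-w_0(\,\cdot\,)^+$ and $\mu_?=-(\,\cdot\,)^-$ and using $\omega_k^*=-w_0\omega_k$, the first-component identity becomes $P=\omega_k$ and the second becomes $N=-\omega_k$, where
$$P=\sum_{i\neq j}c_{ij}(u\omega_i)^+ + (u\omega_j)^+ + (us_j\omega_j)^+, \qquad N=\sum_{i\neq j}c_{ij}(u\omega_i)^- + (u\omega_j)^- + (us_j\omega_j)^-.$$
Because $(\,\cdot\,)^+ + (\,\cdot\,)^- = \mathrm{id}$, summing gives $P+N=u\bigl(\sum_{i\neq j}c_{ij}\omega_i+\omega_j+s_j\omega_j\bigr)=u\cdot 0=0$, the bracket vanishing by the same expansion of $\alpha_j$ together with $s_j\omega_j=\omega_j-\alpha_j$. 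Hence $N=-P$, and both identities collapse to the single claim $P=\omega_k$.

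The crux, and the step I expect to be the main obstacle, is establishing $P=\omega_k$ by a sign analysis of the $\omega$-coefficients of $u\omega_i$. The tool is that the coefficient of $\omega_m$ in $w\omega_i$ equals $\langle\omega_i,(w^{-1}\alpha_m)^\vee\rangle$, whose sign matches that of the root $w^{-1}\alpha_m$, so that $(u\omega_i)^+$ is supported on those $m$ with $u^{-1}\alpha_m>0$. Using $\sum_i c_{ij}\langle\omega_i,(u^{-1}\alpha_m)^\vee\rangle=\langle u\alpha_j,\alpha_m^\vee\rangle=\langle\alpha_k,\alpha_m^\vee\rangle=c_{mk}$ and $us_j\omega_j=u\omega_j-\alpha_k$, I would collect the coefficient of each $\omega_m$ in $P$ and split according to the sign of $u^{-1}\alpha_m$. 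For $u^{-1}\alpha_m>0$ the contributions cancel term-by-term except at $m=k$, where $u^{-1}\alpha_k=\alpha_j$ forces the coefficient to be $1$; for $u^{-1}\alpha_m<0$ the contribution is $\bigl(\langle us_j\omega_j,\alpha_m^\vee\rangle\bigr)^+$, which vanishes. This last vanishing is the heart of the matter: the coefficient of $\omega_m$ in $us_j\omega_j$ is $\le 0$ because $u_l^{-1}\alpha_m=s_j\,u^{-1}\alpha_m<0$, since $s_j$ preserves the sign of any negative $u^{-1}\alpha_m$, the only possible exception $u^{-1}\alpha_m=-\alpha_j$ being impossible as $\alpha_m=-u\alpha_j=-\alpha_k$ is not a positive simple root. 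The single surviving term is exactly $\omega_k$, matching the grading of $a_{k^\circ}$; for the unmodified pieces ($l\neq t_k$, where $u\alpha_j$ is a non-simple positive root) the identical computation yields $P=0$, consistent with the absence of a $k^\circ$ vertex.
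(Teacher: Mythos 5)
Your proposal is correct and follows essentially the same route as the paper's own proof: both derive the identity from Proposition \ref{grading} by tracking how the sign of the $\omega_m$-coefficient of $u_l\omega_i$ is governed by the sign of the root $u_l^{-1}\alpha_m$, using the expansion of $s_j\omega_j$ and the observation that $l=t_k$ precisely when $u_{l-1}\alpha_j=\alpha_k$ is simple (so that only the $\omega_k$-coefficient flips sign between $u_{l-1}\omega_j$ and $u_l\omega_j$). Your reduction of the $\lambda$- and $\mu$-identities to the single claim $P=\omega_k$ via $P+N=0$, and your coefficient-by-coefficient verification, are organizational refinements of the paper's step of taking positive and negative parts of Equation \ref{gradingidentity} term by term with the $\mp\omega_k$ correction when $l=t_k$.
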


We will later show that this proposition follows from the formulas in Proposition \ref{grading}. The proposition immediately gives that our formula for $\gamma$ is invariant under $H \times H \times e$, and moreover transforms correctly under the third copy of $H$.
\end{proof}

Let us now prove the above proposition. Let us suppose that we are dealing with piece of the quiver corresponding to the letter $s_{i_l}$ in the reduced subword $u_l=s_{i_1}\dots s_{i_l}$. Let us label the vertices in this portion of the quiver by $i, j_-, j_+$ and possibly $k^\circ$ as before. 

We have that for $i \neq j$, the function attached to the vertex $i$ is $\Delta_{u_{l}\omega_{i}, \omega_{i}}$, which lies in the graded piece $(\lambda_i, \mu_i, \omega_i)$, where
$$-w_0 \lambda_i = (u_l \omega_i)^+,$$
$$\mu_i = -(u_l \omega_i)^-.$$
Similarly, the function at vertex $j_-$ lies in the graded piece $(\lambda_{j_-}, \mu_{j_-}, \nu_{j_-})$ where
$$-w_0 \lambda_{j_-} = (u_{l-1} \omega_j)^+,$$
$$\mu_{j_-} = -(u_{l-1} \omega_j)^-,$$
and the function at vertex $j_+$ lies in the graded piece $(\lambda_{j_+}, \mu_{j_+}, \nu_{j_+})$ where
$$-w_0 \lambda_{j_+} = (u_{l} \omega_j)^+,$$
$$\mu_{j_+} = -(u_{l} \omega_j)^-.$$

Any given coweight $\omega_{i'}$ will only contribute to one of $-w_0 \lambda_i$ or $\mu_i$. Which of these it contributes to depends on the sign of
$$\langle \alpha_{i'}, u_l \omega_i \rangle = \langle u_l^{-1}\alpha_{i'}, \omega_i \rangle.$$
This sign changes exactly when $u_l^{-1}\alpha_{i'}$ goes from being a positive to a negative root. Thus for $l \leq t_k$, $\omega_k$ contributes to at most the weights $-w_0 \lambda_i$, while for $l \geq t_k$, $\omega_k$ contributes at most to the weights $\mu_i$.

We are now close to proving the proposition. Note that because $u_l = u_{l-1} s_j$, and $s_j \omega_i = \omega_i$, we have that for $i \neq j$,  $u_l \omega_i = u_{l-1} \omega_i$. Note that this is consistent with the fact that 
$$\Delta_{u_{l-1}\omega_{i}, \omega_{i}} = \Delta_{u_{l}\omega_{i}, \omega_{i}}.$$
These are just different names for the function which is associated to the vertex $i$.

Let us first assume that $l \neq t_k$ for any $k$. We need to show that 
$$\sum_{i \neq j} c_{ij} (\lambda_i, \mu_i, \nu_i) + (\lambda_{j_-}, \mu_{j_-}, \nu_{j_-}) + (\lambda_{j_+}, \mu_{j_+}, \nu_{j_+}) = (0,0,\alpha_j).$$
We will need the following identity:
$$s_j \omega_j = -\omega_j - \sum_{i \neq j} c_{ji} \omega_i.$$
This is easy to prove, as $(1+s_j) \omega_j$ is orthogonal to $\alpha_j$, while for $i \neq j$,
$$\langle \alpha_i, (1+s_j) \omega_j \rangle = \langle \alpha_i, s_j \omega_j \rangle = \langle s_j \alpha_i, \omega_j \rangle = -c_{ji}$$
because $s_j \alpha_i = \alpha_i - c_{ji} \alpha_j$. 

Now, applying $u_{l-1}$ to both sides gives
\begin{equation}\label{gradingidentity} u_l \omega_j + u_{l-1} \omega_j = \sum_{i \neq j} - c_{ji} u_{l-1} \omega_i.
\end{equation}
Taking the positive and negative parts of both sides is the same as taking the positive and negative parts term by term gives that 
$$\sum_{i \neq j} c_{ij} \lambda_i + \lambda_{j_-} + \lambda_{j_+} = 0,$$
$$\sum_{i \neq j} c_{ij} \mu_i + \mu_{j_-} + \mu_{j_+} = 0.$$
Finally, we have that
$$\sum_{i \neq j} c_{ij} \nu_i + \nu_{j_-} + \nu_{j_+} = \sum_{i \neq j} c_{ij} \omega_i +2 \omega_j = \alpha_j.$$

Let us now assume that $l = t_k$ for some $k$. Note that the only root that goes from being positive to negative when we apply $s_j$ is $\alpha_j$. Thus, we must have that $u_{l-1}^{-1} \alpha_k = \alpha_j$ and $u_{l}^{-1} \alpha_k = -\alpha_j$. Thus we have that the coefficient of $\omega_k$ in $u_{l-1} \omega_j$ is $1$ and the coefficient in $u_l \omega_j$ is $-1$. The coefficients of the other fundamental weights do not change sign. (In fact, we can say slightly more: $\omega_k$ has a non-zero coefficient in $u_{l-1}\omega_i$ or $u_l \omega_i$ if and only if $i=j$.)

We still have that Equation \ref{gradingidentity} holds. We can proceed similarly as before, except that if we take the positive part of both sides, we get
$$(u_l \omega_j)^+ -\omega_k + (u_{l-1} \omega_j)^+ = (u_l \omega_j + u_{l-1} \omega_j)^+ = \sum_{i \neq j} - c_{ji} (u_{l-1} \omega_i)^+,$$
$$(u_l \omega_j)^-  + (u_{l-1} \omega_j)^- +\omega_k = (u_l \omega_j + u_{l-1} \omega_j)^- = \sum_{i \neq j} - c_{ji} (u_{l-1} \omega_i)^-.$$
Then rearranging gives 
$$\sum_{i \neq j} c_{ij} \lambda_i + \lambda_{j_-} + \lambda_{j_+} = \omega_k^*,$$
$$\sum_{i \neq j} c_{ij} \mu_i + \mu_{j_-} + \mu_{j_+} = \omega_k,$$
as desired.

Proposition \ref{recursive} gives us a way of calculating the weights for the functions in the cluster algebra recursively: it tells us that the weight of the function assigned to the vertex $j_+$ is determined by the weights assigned to the vertices $i$, $j_-$ and possibly $k^\circ$. To put it another way, the structure of the quiver--given by amalgamation of pieces corresponding to letters in a reduced word for $w_0$--determines the weights of the functions. Starting with all the weights of the edge functions, one can use Proposition \ref{recursive} to determine the weights of the face functions. In fact, the relations in Proposition \ref{recursive} overdetermine the weights of the functions, because they also determine the weights on the edge $A_2A_3$, which have already been prescribed.

\section{Weight identities}

We expect that the space $\Conf_3 \A_G$ has as its corresponding $\X$-variety the space $\Conf_3 \B_G$. Thus, for all the unfrozen vertices (which correspond to face functions), we need that the corresponding $X$-coordinate is a function on $\Conf_3 \B_G$. A rational function on $\Conf_3 \A_G$ descends to $\Conf_3 \B_G$ if and only if it is in the $(0,0,0)$-th graded piece of $\mathcal{O}(\Conf_3 \A_G)$. Because we have that
$$p^*(X_i) = \prod_{j \in I}A_j^{b_{ij}},$$
if $A_j$ lies in the graded piece $(\lambda_j, \mu_j, \nu_j)$, we need the following to hold:

\begin{prop} \label{face} If $i \in I$ is the index of a non-frozen vertex, then we have that
$$\sum_{j \in I} b_{ij} (\lambda_j, \mu_j, \nu_j)=(0,0,0)$$
If $i \in I$ is the index of a frozen vertex corresponding to a function of weight $(\omega_k, \omega_k^*,0)$, then 
$$\sum_{j \in I} b_{ij} (\lambda_j, \mu_j, \nu_j)=\frac{1}{2}(\alpha_k,-\alpha_k^*).$$
Similar equations hold for the cyclic shifts of these edge variables.
\end{prop}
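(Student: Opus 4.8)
The plan is to treat the two cases separately, in each case computing the weight $\sum_{j\in I}b_{ij}(\lambda_j,\mu_j,\nu_j)$ of the monomial $p^*(X_i)=\prod_j A_j^{b_{ij}}$ directly from the amalgamated quiver, and then collapsing it using the weight relations already established in Proposition \ref{recursive} together with the explicit weights of Proposition \ref{grading}. The guiding observation is that the row of the exchange matrix at a vertex is assembled, piece by piece, out of exactly the local data entering those relations, so the global identity should reduce to a combination of local ones.

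Consider first an unfrozen (face) vertex $v$ on row $j$, carrying $\Delta_{u_l\omega_j,\omega_j}$ with $s_{i_l}=s_j$. Let $l''<l<l'$ be the previous and next occurrences of $s_j$, so $v$ is $j_+$ in the piece at $l$ and $j_-$ in the piece at $l'$; write $v_-,v_+$ for the neighboring row-$j$ vertices. First I would read off the row of $B$ at $v$ from the amalgamation rules. The two solid horizontal arrows give $b_{v,v_-}=-1$ and $b_{v,v_+}=+1$. For each node $i$ adjacent to $j$, the half-arrows $i\to j_+$ (from piece $l$), $j_-\to i$ (from piece $l'$), and those from every intervening occurrence of $s_i$ all touch $v$; summing their signed contributions, the row-$i$ vertices strictly between the two ends telescope away, leaving only $b_{v,w_i^{(l)}}=-c_{ij}$ and $b_{v,w_i^{(l')}}=+c_{ij}$, where $w_i^{(l)},w_i^{(l')}$ are the row-$i$ vertices bordering the two pieces. (Here I use that in these quivers $|b_{ab}|=-c_{ba}$, and that solid arrows between equal-multiplier vertices have $|b|=1$.) When $l$ or $l'$ is a distinguished index $t_k$, there is in addition an arrow $k^\circ\to v$ or $v\to k^\circ$ contributing $b_{v,k^\circ}=\pm1$. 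With the row in hand, the desired identity $\sum_j b_{vj}(\lambda_j,\mu_j,\nu_j)=(0,0,0)$ is exactly the difference of the two instances of Proposition \ref{recursive} attached to the pieces at $l'$ and at $l$: each equals $(0,0,\alpha_j)$ (the $-(\lambda_{k^\circ},\mu_{k^\circ},\nu_{k^\circ})$ term appearing precisely at a $t_k$), and subtracting cancels both the shared vertex $v$ and the constant $\alpha_j$, producing $-W_{v_-}+W_{v_+}+\sum_{i\neq j}c_{ij}(W_{w_i^{(l')}}-W_{w_i^{(l)}})+(\text{$k^\circ$ terms})=0$, where $W_\bullet$ denotes the graded weight. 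This is term-for-term the weighted row sum just computed, so the face case follows.

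For a frozen $A_1A_2$ vertex $k^\circ$, which occurs only in the modified piece at $l=t_k$ with $s_{i_l}=s_j$, the relevant arrows are $j_-\to k^\circ$, $k^\circ\to j_+$, and the dotted arrows joining $k^\circ$ to the neighboring $k'^{\circ}$. Since $u_{l-1}^{-1}\alpha_k=\alpha_j$ forces $\alpha_k$ and $\alpha_j$ to have equal length, the vertical arrows have $|b|=1$, so $b_{k^\circ,j_-}=+1$ and $b_{k^\circ,j_+}=-1$, while the dotted neighbor arrows give $b_{k^\circ,k'^{\circ}}=\pm\tfrac12 c_{k'k}$ with sign governed by whether $t_{k'}<t_k$. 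Using $W_{j_\pm}=(\lambda_{j_\pm},\mu_{j_\pm},\omega_j)$ and $W_{k'^{\circ}}=(\omega_{k'},\omega_{k'}^{*},0)$, the third component of the weighted row sum is $\omega_j-\omega_j=0$ at once. For the first and second components I would substitute the weights of Proposition \ref{grading} and the identity $u_l\omega_j=u_{l-1}\omega_j-\alpha_k$ (which follows from $s_j\omega_j=\omega_j-\alpha_j$ and $u_{l-1}\alpha_j=\alpha_k$), then take positive and negative parts; the expansion $\alpha_k=2\omega_k+\sum_{k'\sim k}c_{k'k}\omega_{k'}$ produces the $\omega_k$ term, and the neighbor coefficients assemble into $\tfrac12\alpha_k$ in the first slot and $-\tfrac12\alpha_k^{*}$ in the second, with the involution $-w_0$ accounting for the precise labeling. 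The statements for the $A_2A_3$ and $A_1A_3$ edges then follow by transporting along the twisted cyclic shift $T$.

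The main obstacle is the frozen case, not the face case. There the answer $\tfrac12(\alpha_k,-\alpha_k^{*},0)$ is manifestly independent of the reduced word, whereas both the signs $\pm\tfrac12 c_{k'k}$ of the $k^\circ$--$k'^{\circ}$ arrows and the positive-part differences $[(u_{l-1}\omega_j)^+-(u_l\omega_j)^+]_{k'}$ depend on the relative order of $t_{k'}$ and $t_k$. The crux is to show these two word-dependent effects cancel: concretely, that the $\omega_{k'}$-coefficient of $\lambda_{j_-}-\lambda_{j_+}$ is $0$ when $t_{k'}<t_k$ and $c_{k'k}$ when $t_{k'}>t_k$. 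This in turn rests on matching the sign of the coefficient $\langle u_{l-1}\omega_j,\alpha_{k'}^{\vee}\rangle$ against the sign of the root $u_{l-1}^{-1}\alpha_{k'}$ (equivalently, against $t_{k'}\lessgtr t_k$), while carrying $-w_0$ correctly through $-w_0\lambda_{j_\mp}=(u_{l\mp1}\omega_j)^+$. Verifying this cancellation, and the parallel one for the negative parts yielding $-\tfrac12\alpha_k^{*}$, is the single genuinely delicate computation of the argument.
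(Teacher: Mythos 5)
Your treatment of the face vertices is essentially the paper's own argument: the identity at an unfrozen vertex $v=j_+^1=j_-^2$ is exactly the difference of the two instances of Proposition \ref{recursive} attached to consecutive occurrences of $s_j$, and your telescoping verification of the exchange-matrix row only makes explicit what the paper asserts when it says the subtraction ``gives exactly the identity we seek.'' The genuine divergence is in the frozen case, and there your route differs from the paper's. The paper never computes in a general reduced word: it observes that $\sum_{j\in I} b_{ij}(\lambda_j,\mu_j,\nu_j)$ is invariant under mutation, and therefore evaluates it in a single convenient cluster coming from a word with $s_{i_1}=s_k$ (and a word ending in $s_k$ for the $A_2A_3$ edge). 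In that cluster $t_k=1$ is minimal, so every dotted arrow at $k^{\circ}$ points inward, the four relevant weights are completely explicit, and the answer drops out in two lines; the $A_1A_3$ vertex $k_-$ is handled by a second short computation in the same cluster rather than by cyclic shift.

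Your alternative---working at the modified piece $l=t_k$ of an arbitrary word---does close, because the cancellation you flag as unverified follows directly from the paper's Fact that each sequence $u_i^{-1}\alpha_{k'}$ changes sign exactly once. For $k'$ adjacent to $k$: if $t_{k'}>t_k$ then $u_{l-1}^{-1}\alpha_{k'}$ and $u_{l}^{-1}\alpha_{k'}$ are both positive, so the coefficients $\langle\omega_j,u_{l-1}^{-1}\alpha_{k'}^{\vee}\rangle$ and $\langle\omega_j,u_{l}^{-1}\alpha_{k'}^{\vee}\rangle$ are both nonnegative, the drop in $u_l\omega_j=u_{l-1}\omega_j-\alpha_k$ is absorbed entirely by the positive part, and the $\omega_{k'}^*$-coefficient of $\lambda_{j_-}-\lambda_{j_+}$ is $c_{k'k}$; if $t_{k'}<t_k$ both roots are negative, both coefficients are nonpositive, and that coefficient is $0$. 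In either case adding the dotted-arrow contribution $\pm\tfrac12 c_{k'k}$ gives $\tfrac12 c_{k'k}$, and together with the $\omega_k$-coefficient jumping from $1$ to $-1$ this assembles into $\tfrac12\alpha_k^*$ in the first slot and, by the dual computation on negative parts, $-\tfrac12\alpha_k$ in the second. So your proof is completable exactly along the lines you indicate; what the paper's mutation-invariance trick buys is the avoidance of this word-dependent case analysis, while your version buys an identity verified uniformly in every reduced-word cluster without invoking invariance of the expression under mutation (a fact the paper itself leaves as ``easy to check''). One caution: deferring the $A_1A_3$ and $A_2A_3$ edges to the twisted cyclic shift uses $T$-equivariance of the whole cluster structure, which the paper asserts but does not prove here; its own computations for those edges are direct and avoid that input.
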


Let us verify the first identity. Let us fix a non-frozen vertex in the quiver. Suppose it occurs on level $j$. Then this vertex is involved in two pieces of the quiver coming from the simple reflection $s_j$. Let us suppose that $s_{i_{l_1}}$ and $s_{i_{l_2}}$ are two occurences of $s_j$ in the reduced word for $w_0$ that have no occurence of $s_j$  between them. Let us label the vertices in the piece of the quiver corresponding to $s_{i_{l_1}}$ by $i^1$ (for $i \neq j$), $j_-^1$, $j_+^1$, and possibly $k^{1\circ}$, and label the vertices in the piece of the quiver corresponding to $s_{i_{l_2}}$ by $i^2$ (for $i \neq j$), $j_-^2$, $j_+^2$, and possibly $k^{2\circ}$. Moreover, suppose our vertex corresponds to $j_+^1 = j_-^2$.

Then applying Proposition \label{recursive} gives us that
$$\sum_{i \neq j} c_{ij} (\lambda_{i^1}, \mu_{i^1}, \nu_{i^1}) + (\lambda_{j_-^1}, \mu_{j_-^1}, \nu_{j_-^1}) + (\lambda_{j_+^1}, \mu_{j_+^1}, \nu_{j_+^1}) - (\lambda_{k^{1\circ}}, \mu_{k^{1\circ}}, \nu_{k^{1\circ}})= (0,0,\alpha_j),$$
$$\sum_{i \neq j} c_{ij} (\lambda_{i^2}, \mu_{i^2}, \nu_{i^2}) + (\lambda_{j_-^2}, \mu_{j_-^2}, \nu_{j_-^2}) + (\lambda_{j_+^2}, \mu_{j_+^2}, \nu_{j_+^2}) - (\lambda_{k^{2\circ}}, \mu_{k^{2\circ}}, \nu_{k^{2\circ}})= (0,0,\alpha_j).$$
Subtracting the first equation from the second gives exactly the identity we seek in Proposition (\ref{face}).

Finally, we would like to calculate the expression in the Proposition (\ref{face}) for a frozen variable. It is easy to check that the expression in Proposition (\ref{face}) is invariant under mutation. Thus we may choose to calculate it in a convenient cluster. Let us first treat the case of the function for the edge $A_1A_2$ which lies in the invariant space $[V_{\omega_k^{*}} \otimes V_{\omega_k} \otimes \mathbbm{1}]^G$. Let us consider a cluster coming from a reduced word $w_0=s_{i_1} s_{i_2} s_{i_3} \cdots s_{i_{K-1}} s_{i_K}$ where $s_{i_1}=s_k$. Then the portion of the quiver corresponding to the letter $s_{i_1}$ has vertices that we will label $i$ (for $i \neq k$), $k_-, k_+$ and $k^{\circ}$. As before, let the vertex $?$ be attached to a function of weight $(\lambda_?, \mu_?, \nu_?)$. Then we have

$$(\lambda_{i}, \mu_{i}, \nu_{i}) = (\omega_i^*, 0, \omega_i),$$
$$(\lambda_{k_-}, \mu_{k_-}, \nu_{k_-}) = (\omega_k^*, 0, \omega_k),$$
$$(\lambda_{k_+}, \mu_{k_+}, \nu_{k_+}) = (\sum_{i \neq k}-c_{ki} \omega_i^*, \omega_k, \omega_k),$$
$$(\lambda_{k^\circ}, \mu_{k^\circ}, \nu_{k^\circ}) = (\omega_k^*, \omega_k, 0).$$

To calculate the weight $(\lambda_{k_+}, \mu_{k_+}, \nu_{k_+})$ we use that $s_k \omega_k = \omega_k - \alpha_k = \sum_{i \neq k} -c_{ki} \omega_i - \omega_k$.

Now we have that there is an arrow from $k_-$ to $k^\circ$, and from $k^\circ$ to $k_+$. Moreover, there are half-arrows from the vertex $i^\circ$ to $k^\circ$ whenever $i$ and $k$ are adjacent in the Dynkin diagram. Thus, for the vertex $k^\circ$, the expression from Proposition (\ref{face}) becomes
$$(\omega_k^*, 0, \omega_k) - (\sum_{i \neq k}-c_{ki} \omega_i^*, \omega_k, \omega_k) + \frac{1}{2} \sum_{i \neq k} -c_{ki} (\omega_i^*, \omega_i, 0)$$
which is equal to
$$\frac{1}{2} (2\omega_k^* + \sum_{i \neq k} c_{ki} \omega_i^*, - 2\omega_k - \sum_{i \neq k} c_{ki} \omega_i, 0) = \frac{1}{2} (\alpha_{k}^*, -\alpha_k, 0).$$
This is precisely the expectation from \cite{Le2}.

Let us do the same computation for the vertex $k_-$. There is an arrow from $k_+$ to $k_-$ and from $k_-$ to $k^\circ$. Moreover there are half-arrows from $k_-$ to $i$ whenever $i$ and $k$ ar adjacent in the Dynkin diagram. The expression becomes
$$(\sum_{i \neq k}-c_{ki} \omega_i^*, \omega_k, \omega_k) - (\omega_k^*, \omega_k, 0) - \frac{1}{2} \sum_{i \neq k} -c_{ki} (\omega_i^*, 0, \omega_i) = $$
$$\frac{1}{2}(-2\omega_k^* - \sum_{i \neq k} c_{ki} \omega_i^*, 0 , 2\omega_k + \sum_{i \neq k} c_{ki} \omega_i) = (-\alpha_{k}^*, 0, \alpha_k).$$
This handles the case of functions for the edge $A_1A_3$. A similar computation can be performed from the edge $A_2A_3$ using a cluster where the last letter $s_{i_K}$ in the reduced word for $w_0$ is chosen to be $s_k$.

If we amalgamate the cluster structure on triangles to build up cluster structures for other surfaces, we will amalgamate along edges and unfreeze the edge variables. The formula in Proposition (\ref{face}) for frozen edge variables guarantee that after amalgamation, we will have $\sum_{j \in I} b_{ij} (\lambda_j, \mu_j, \nu_j)=(0,0,0)$ for $i$ the index of a frozen variable that becomes unfrozen.

\section{$\Conf_4 \A_G$ and the largest double Bruhat cell}

We previously explained how the cluster structure on $\Conf_4 \A_G$ can be related to the one on $B^-$ (or if one prefers, the one on $B^+$). In this section, we explain how the cluster structure on $\Conf_4 \A_G$ similarly comes from the cluster structure on the double Bruhat cell $G^{w_0,w_0}$. We let $(A_1,A_2,A_3,A_4) \in  \Conf_4 \A_G$.

First, recall the double Bruhat cell
$$G^{u,v}:= B^+uB^+ \cap B^-vB^-.$$

The constructions in \cite{BFZ} give a cluster for $G^{u,v}$ for every reduced word for $(u,v) \in W \times W$. A reduced word for $(u,v)$ is a shuffle of a reduced word for $u$ and a reduced word for $v$. The reduced word for $u$ will be in the letters $-1, -2, \dots, -n$, while the word for $v$ will be in the letters $1, 2, \dots, n$. The reflections $s_{-i}$ generate the first copy of $W$, while the reflections $s_i$ generate the second copy of $W$. For example, for $SL_4$, we could take $$(w_0,w_0)=s_{-1}s_{-2}s_{-3}s_{-1}s_{-2}s_{-1}s_{3}s_{2}s_{1}s_{3}s_{2}s_{3}.$$ Note that we use negative indices for $u$, whereas in previous sections of this paper we used regular indices for $u$.

Consider the natural map from $G^{w_0,w_0}$ to $\Conf_4 \A_G$ given by the formula
$$i: g \in G^{w_0,w_0} \rightarrow (U^-, \overline{w_0}U^-, g \cdot \overline{w_0}U^-, g \cdot U^-) \in \Conf_4 \A_{G}.$$
This is an injective map. Functions on $\Conf_4 \A_G$ can be pulled back to give functions on $G^{w_0,w_0}$. The cluster variables on $G^{w_0,w_0}$pull back to give a subset of the cluster variables on $\Conf_4 \A_G$. More precisely, the edge variables for the edges $A_1A_2$ and $A_3A_4$ take the value $1$ on the image $i(G^{w_0,w_0})$, while the remaining cluster variables pull back to give cluster variables on $G^{w_0,w_0}$.

Let $s_{i_1} \cdots s_{i_{2K}}$ be a reduced word for $(w_0,w_0)$. To construct the quiver for the corresponding cluster for $G^{w_0,w_0}$, we amalgamate pieces corresponding to letters $s_i$ or $s_{-i}$. 

We can similarly associate a cluster for $\Conf_4 \A_G$ to every reduced word for $(w_0,w_0) \in W \times W$. To obtain this quiver, we also use the amalgamation procedure, but some pieces will be modified. Define $(u_l,v_l) = s_{i_1} \cdots s_{i_{l}}$. Let $t_k$ be the index such that $u_i^{-1} \alpha_k$ first becomes a negative root when $i=t_k$. Similarly, let $r_k$ be the index such that $v_i^{-1} \alpha_k$ first becomes a negative root when $i=r_k$. Then for the letters of the reduced word which are $t_k$ or $r_k$, we use modified pieces. 

Let us describe the procedure in more detail. The pieces corresponding to $s_{-i}$ will be the pieces as depicted in Figure 2, while the modified pieces corresponding to $s_{-i}$ will be as depicted in Figure 6. (These were the pieces we previously labelled $s_i$.) The modified pieces will be used for $i=t_k$. For these pieces, we will have extra vertices $k^\circ$ whose functions belong to the invariant space $[V_{\omega_k^{*}} \otimes V_{\omega_k} \otimes \mathbbm{1} \otimes \mathbbm{1}]^G$. These will be functions for the edge $A_1A_2$.

Let us now describe the modified pieces associated to $s_i$. Essentially, they look the same as those for $s_{-i}$, but with the arrows reversed. Suppose that $s_{i_{r_k}} = s_j$. Then the quiver for this simple reflection will consist of $n+2$ vertices. There will be two vertices $j_-$ and $j_+$ on the $j$-th row as before. There will be one vertex $i$ on the $i$-th row for each every other node $i \neq j$ of the Dynkin diagram. Moreover, the quiver will have a vertex $k^{\bullet}$, which will correspond to the edge function for the edge $A_3A_4$ in the invariant space $[\mathbbm{1} \otimes \mathbbm{1} \otimes V_{\omega_k^{*}} \otimes V_{\omega_k} ]^G$

The quiver will have the following arrows:

\begin{itemize}
\item An arrow from $j_-$ to $j_+$.
\item A dotted arrow from $j_+$ to $i$ whenever $i$ and $j$ are adjacent in the Dynkin diagram.
\item A dotted arrow from $i$ to $j_-$ whenever $i$ and $j$ are adjacent in the Dynkin diagram.
\item An arrow from $j_+$ to $k^{\bullet}$.
\item An arrow from $k^{\bullet}$ to $j_-$.
\end{itemize}

Here is how the pieces $s_1, s_2, s_3$ would look if they occured as $s_{i_{r_k}}$:

\begin{center}
\begin{tikzpicture}[scale=2]

  \node (x211) at (-2,0.9) {\Large $\ontop{2}{\bullet}$};

  \node (x121) at (-2.5,0) {\Large $\ontop{1_-}{\bullet}$};
  \node (x112) at (-1.5,0) {\Large $\ontop{1_+}{\bullet}$};
  \node (12k) at (-2,1.5) {\Large $\ontop{k^{\bullet}}{\bullet}$};

  \draw [->, postaction={decorate}] (x121) -- (x112) node [midway, above] {$s_1$};
  \draw [->, dashed] (x211) to (x121);
  \draw [->, dashed] (x112) to (x211);
  \draw [->] (x112) to (12k);
  \draw [->] (12k) to (x121);

  \node (x310) at (0,0.9) {\Large $\ontop{3}{\bullet}$};

  \node (x220) at (-0.5,0) {\Large $\ontop{2_-}{\bullet}$};
  \node (x211) at (0.5,0) {\Large $\ontop{2_+}{\bullet}$};

  \node (x121) at (0,-0.9) {\Large $\ontop{1}{\bullet}$};
  \node (12k) at (0,1.5) {\Large $\ontop{k^{\bullet}}{\bullet}$};

  \draw [->, postaction={decorate}] (x220) -- (x211) node [midway, above] {$s_2$};

  \draw [->, dashed] (x211) to (x121);
  \draw [->, dashed] (x310) to (x220);

  \draw [->, dashed] (x121) to (x220);
  \draw [->, dashed] (x211) to (x310);

  \draw [->] (12k) to (x220);
  \draw [->] (x211) to (12k);

  \node (x310) at (1.5,0) {\Large $\ontop{3_-}{\bullet}$};
  \node (x301) at (2.5,0) {\Large $\ontop{3_+}{\bullet}$};

  \node (x211) at (2,-0.9) {\Large $\ontop{2}{\bullet}$};
  \node (12k) at (2,1.5) {\Large $\ontop{k^{\bullet}}{\bullet}$};

  \draw [->, postaction={decorate}] (x310) -- (x301) node [midway, above] {$s_3$};
  \draw [->,dashed] (x211) to (x310);
  \draw [->,dashed] (x301) to (x211);
  
  \draw [->] (12k) to (x310);
  \draw [->] (x301) to (12k);

\draw[yshift=-1.2cm]
  node[below,text width=6cm] % ,style=information text --- don't know this one
  {
  Figure 5. Modified pieces of the quiver for $s_1, s_2, s_3$ when a simple root changes sign for $\Conf_3 \A_G$.
  };

\end{tikzpicture}
\end{center}

The unmodified pieces can be obtained by removing the vertices $k^\bullet$. To build the quiver for $\Conf_4 \A_G$ corresponding to a reduced word for $(w_0,w_0)$, we amalgamate the pieces corresponding to the simple reflections in the reduced word decomposition for $(w_0,w_0)$, using the modified pieces for the $s_{i_{t_k}}$ and $s_{i_{r_k}}$. The last step is that we need to add some arrows between the vertices $k^{\circ}$ and between the vertices $k^\bullet$. We add a dotted arrow from $k^{\circ}$ to $k'^{\circ}$ whenever nodes $k$ and $k'$ are adjacent in the Dynkin diagram for $G$, and $t_{k'} < t_k$. We also add a dotted arrow from $k^{\bullet}$ to $k'^{\bullet}$ whenever nodes $k$ and $k'$ are adjacent in the Dynkin diagram for $G$, and $r_k < r_{k'}$. This completes the construction of the quiver for $\Conf_4 \A_G$.

We have already described the functions attached to $k^\circ$ and $k^\bullet$. We now describe the remaining functions.

Let us consider the $l$-th piece of the quiver corresponding to the letter $s_{i_l}$. Recall that $(u_l,v_l) = s_{i_1} \cdots s_{i_{l}}$. Then set
$$-w_0 \lambda_i = (u_l \omega_i)^+,$$
$$\mu_i = -(u_l \omega_i)^-,$$
$$w_0 \nu_i = (v_l \omega_i^*)^-,$$
$$\kappa_i = (v_l \omega_i^*)^+.$$
Thus $\lambda, \mu, \nu, \kappa$ are chosen minimally so that
$$u\omega_i = -w_0 \lambda - \mu,$$
$$v\omega_i^* = w_0 \nu + \kappa.$$
Then the functions attached to the vertex $i \neq j$ or $j_+$ lie in the invariant spaces 
$$[V_{\lambda_i} \otimes V_{\mu_i} \otimes V_{\nu_i} \otimes V_{\kappa_i} ]^G,$$
$$[V_{\lambda_j} \otimes V_{\mu_j} \otimes V_{\nu_j} \otimes V_{\kappa_j} ]^G,$$
respectively. These functions pull back to $G^{w_0,w_0}$ to give the reduced minors $\Delta_{u \omega_{i}, v\omega_{i}}$.

\end{document}